\theoremstyle{plain}
\newtheorem{theorem}{Theorem}
\newtheorem{lemma}[theorem]{Lemma}
\newtheorem{proposition}[theorem]{Proposition}
\newtheorem{corollary}[theorem]{Corollary}
\theoremstyle{definition}
\newtheorem{remark}{Remark}
\newcommand{\sgn}{\operatorname{sgn}}
\newcommand{\spt}{\operatorname{spt}}
\newcommand{\cov}{\operatorname{cov}}
\newcommand{\nor}{\operatorname{nor}}
\newcommand{\Tan}{\operatorname{Tan}}
\newcommand{\Nor}{\operatorname{Nor}}
\newcommand{\Lin}{\operatorname{Lin}}
\newcommand{\const}{\operatorname{const}}
\newcommand{\R}{\mathbb{R}}
\newcommand{\Z}{\mathbb{Z}}
\newcommand{\Ha}{\mathcal{H}}
\newcommand{\cL}{\mathcal{L}}
\newcommand{\cE}{\mathcal{E}}
\newcommand{\cF}{\mathcal{F}}
\newcommand{\1}{{\bf 1}}
\newcommand{\reach}{{\rm reach}\,}
\newcommand{\E}{{\mathbb E}}
\newcommand{\relint}{\operatorname{rel\,int}}
\date{\today}
\begin{document}

\title{Mean Euler characteristic of stationary random closed sets}

\author{Jan Rataj}

\address{Charles University\\
Faculty of Mathematics and Physics\\
Sokolovsk\'a 83\\
186 75 Praha 8\\
Czech Republic}

\email{rataj@karlin.mff.cuni.cz}

\thanks{The research was supported by the Czech Science Foundation, Project No.~18-11058S, and by Charles University, Project PRIMUS/17/SCI/3.}

\begin{abstract}
The translative intersection formula of integral geometry yields an expression for the mean Euler characteristic of a stationary random closed set intersected with a fixed observation window. We formulate this result in the setting of sets with positive reach and using flag measures which yield curvature measures as marginals. As an application, we consider excursion sets of stationary random fields with $C^{1,1}$ realizations, in particular, stationary Gaussian fields, and obtain results which extend those known from the literature.
\end{abstract}

\keywords{set with positive reach, curvature measure, flag measure, Euler characteristic, translative integral geometry, random closed set, Gaussian random field, curvature density}
\subjclass[2010]{60D05; 60G60; 53C65}
\maketitle

\section{Introduction}
Translative and kinematic intersection formulas for random closed sets play an important role in stochastic geometry. We refer to \cite{SW08}, namely Parts II, III, as a basic reference source. Since we are interested in (total) curvature measures, in particular, Euler characteristic, of the intersection of a stationary random closed set in $\R^d$ with a fixed observation window $W$, some regularity assumptions are needed. While the basic model considered in \cite{SW08} is that of the extended convex ring, we will work with sets of locally positive reach (i.e., closed subsets of $\R^d$ with positive reach at any its point) instead. One basic advantage is that sets with $C^2$ (or even $C^{1,1}$) boundaries are included. The basic reference for random closed sets with positive reach is \cite{Z84}.

Let $Z\subset\R^d$ be a stationary random closed set with positive reach. Then, given $0\leq k\leq d-1$, the $k$th curvature measure, $C_k(Z,\cdot)$, is a random signed measure and its mean is translation invariant, hence, provided that
\begin{equation}  \label{fin_dens}
\E |C_k|(Z,\cdot)\text{ is locally finite},\quad k=0,\dots, d
\end{equation}
($|C_k|(Z,\cdot)$ denotes the total variation of $C_k(Z,\cdot)$), it must be a multiple of the Lebesgue measure:
$$\E C_k(Z,\cdot)=\overline{V}_k(Z)\, \cL^d(\cdot),$$
and $\overline{V}_k(Z)$ is called the $k$th \emph{curvature density} of $Z$. We define also $\overline{V}_d(Z)$ by $\E\cL^d(Z\cap\cdot)=\overline{V}_d(Z)\cL^d(\cdot)$ ($\cL^d$ stands for the Lebesgue measure in $\R^d$).

If $Z$ were moreover isotropic, one could apply the principal kinematic formula (\cite[Theorem~6.1]{RZ19}) and obtain for any $W\subset\R^d$ with positive reach 
\begin{equation} \label{PKF}
\E \chi(Z\cap W)=\sum_{k=0}^d \beta_{d,k}\overline{V}_k(Z)C_{d-k}(W)
\end{equation}
with constants 
$$\beta_{d,k}=\frac{\Gamma\left(\frac{k+1}2\right) \Gamma\left(\frac{d-k+1}2\right)}{\Gamma\left(\frac{d+1}2\right)\Gamma\left(\frac{1}2\right)}$$ 
($\chi$ denotes the Euler-Poincar\'e characteristic and $C_k(W):=C_k(W,\R^d)$ is the total $k$th curvature measure of $W$). If, however, only stationarity is assumed, we have to use the translative intersection formula instead (see \cite{RZ95} or \cite[Ch.~6]{RZ19} for the case of sets with positive reach). But then, the right hand side does not factorize into products of curvatures of the two separate sets, but includes certain mixed functionals of both sets. These mixed functionals can be, however, under certain integrability assumption, represented as product integrals of a given function with respect to so called \emph{flag measures} of the two sets, see \cite{HRW13} for the case of convex bodies (the case of sets with positive reach follows directly, as shown in Section~2). For $k-0,\dots,d-1$, the $k$th flag measure of $Z$, $\Gamma_k(Z,\cdot)$, is a signed random measure on the product $\R^d\times F^\perp(d,k^*)$, where $k^*:=d-1-k$ and
$$F^\perp(d,j):=\{(u,U):\, u\in S^{d-1},\, U\in G(d,j),\, u\perp U\}$$
is the $j$th \emph{flag space} (here $S^{d-1}$ denotes the unit sphere and $G(d,k)$ the Grassmannian of $j$-dimensional linear subspaces of $\R^d$). The marginal measure $\Gamma_k(Z,(\cdot)\times F^\perp(d,k^*))$ agrees with $C_k(Z,\cdot)$ and again, if $Z$ is stationary, the mean value $\E\Gamma_k(Z,\cdot)$ disintegrates, provided it is locally finite, as follows:
\begin{equation} \label{E_disint2}
\E\int g(x,u,U)\,\Gamma_k(Z,d(x,u,U))=\int_{\R^d}\int_{F^\perp(d,k^*)}g(x,u,U)\, \overline{\Omega}_k(Z,d(u,U))\, dx
\end{equation}
for any nonnegative measurable function $g$ on $\R^d\times F^\perp(d,k^*)$, with a finite signed measure $\overline{\Omega}_k(Z,\cdot)$ called $k$th \emph{specific flag measure} of $Z$. For a \emph{compact} set $W\subset\R^d$ with positive reach, $\Omega_k(W,\cdot)$ is simply the marginal of $\Gamma_k(W,\cdot)$ on $F^\perp(d,k)$. Further, the marginal measures of $\overline{\Omega}_k(Z,\cdot)$, $\Omega_k(W,\cdot)$ on the unit sphere $S^{d-1}$ are denoted by $\overline{S}_k(Z,\cdot)$, $S_k(W,\cdot)$, and they are called $k$th \emph{specific area measure} of $Z$, \emph{area measure} of $W$, respectively.

We say that two sets $X,Y\subset\R^d$ with locally positive reach \emph{touch} if there exists pairs in the unit normal bundles $(x,u)\in\nor X$, $(x,-u)\in\nor Y$, see Section~2 for details. Note that the intersection $X\cap Y$ has locally positive reach provided that $X,Y$ do not touch; otherwise, the positive reach of the intersection is not guaranteed.

Our first main result follows. The function $F_k(\theta)$ of an angle $\theta\in[0,\pi]$, as well as the continuous function $\varphi_k$ on $F^\perp(d,k^*)\times F^\perp(d,(d-k)^*)$, are introduced in Section~2. By $|\mu|$ we denote the total variation of a signed measure $\mu$.

\begin{theorem}  \label{T1}
Let $Z\subset\R^d$ be a stationary random closed set with realizations of locally positive reach and satisfying \eqref{fin_dens}. Let $W\subset\R^d$ be a compact set with positive reach and assume that 
\begin{equation} \label{touch2}
\Pr[Z\text{ and }W\text{ touch}]=0,
\end{equation}
and  
\begin{equation}  \label{integr_E}
F_k(\angle(u,v))|\varphi_k(u,U,v,V)|\text{ is } \E|\Omega_k|(Z,\cdot)\otimes|\Omega_{d-k}|(W,\cdot)\text{-integrable}.
\end{equation}
Then
\begin{align*}
\E \chi(Z\cap W)=&\overline{V}_0(Z)\cL^d(W)+\overline{V}_d(Z)\chi(W)\\
+&\sum_{k=1}^{d-1}\int\int F_k(\angle(u,v))\, \varphi_k(u,U,v,V)\, \Omega_{d-k}(W,d(v,V))\, \overline{\Omega}_k(Z,d(u,U)).
\end{align*}
A sufficient condition for \eqref{integr_E} is that
\begin{equation}   \label{suffic}
\E \int\int \sin^{3-d}\angle(u,v)\, |S_k|(Z,du)\, |S_{d-k}|(W,dv)<\infty.
\end{equation}
Further, \eqref{suffic} holds whenever $\E|S_k|(Z,\cdot)$ has bounded density w.r.t. uniform distribution on $S^{d-1}$.
\end{theorem}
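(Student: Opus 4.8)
The plan is to obtain the formula from the translative intersection formula of integral geometry for the Euler characteristic, rewrite the resulting mixed functionals by means of flag measures, and then exploit the stationarity of $Z$ to pass to densities. To begin, I would discard the null event on which $Z$ and $W$ touch: by \eqref{touch2} this is harmless, and on its complement $Z\cap W$ has locally positive reach, so that $\chi(Z\cap W)=C_0(Z\cap W,\R^d)$ together with all the curvature and flag measures of $Z\cap W$ are well defined. Applying the translative formula for sets of positive reach (\cite[Ch.~6]{RZ19}, cf.\ \cite{RZ95}) at index $0$ to the pair $(Z\cap C,W)$ for a large window $C$, taking the mean, dividing by $\cL^d(C)$ and letting $C\uparrow\R^d$, I would identify $\E\chi(Z\cap W)$ with a sum $\sum_{k=0}^d$ of mean mixed densities in which a degree $k$ contribution of $Z$ is paired with a degree $d-k$ contribution of $W$; by stationarity the normalized left-hand side converges to $\E\chi(Z\cap W)$, the boundary effects from $\partial C$ being negligible after normalization. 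The two extreme indices give products: using $V_d=\cL^d$, $V_0=\chi$ and the curvature densities of $Z$, the term $k=0$ equals $\overline V_0(Z)\,\cL^d(W)$ and the term $k=d$ equals $\overline V_d(Z)\,\chi(W)$.

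For each middle index $1\le k\le d-1$ I would replace the corresponding mixed density by its flag representation, namely the integral of $F_k(\angle(u,v))\,\varphi_k(u,U,v,V)$ against the product of flag measures of $Z$ and $W$; this is the content of the representation extended to sets of positive reach in Section~2 (following \cite{HRW13}). Taking the mean and applying Fubini's theorem --- legitimate precisely under the integrability hypothesis \eqref{integr_E} --- I would interchange expectation with the flag integration and then invoke the disintegration \eqref{E_disint2} of $\E\,\Gamma_k(Z,\cdot)$. The Lebesgue factor $dx$ appearing there absorbs the translation produced by the translative formula and leaves exactly the specific flag measure $\overline\Omega_k(Z,\cdot)$ integrated against $\Omega_{d-k}(W,\cdot)$. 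Assembling the middle terms with the two extreme terms yields the displayed identity.

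It remains to justify the integrability statements. For \eqref{suffic}$\Rightarrow$\eqref{integr_E} I would use the estimate from Section~2 that bounds $F_k(\angle(u,v))\,|\varphi_k(u,U,v,V)|$ by a constant multiple of $\sin^{3-d}\angle(u,v)$, a quantity depending on the two unit normals alone; integrating out the Grassmannian coordinates $U,V$ then collapses the product of flag measures to the product of area measures and reproduces the left-hand side of \eqref{suffic}. For the last assertion, if $\E|S_k|(Z,\cdot)$ has bounded density with respect to the uniform distribution on $S^{d-1}$, I would dominate the inner integral by a constant times $\int_{S^{d-1}}\sin^{3-d}\angle(u,v)\,du$, which is finite and independent of $v$ because the singularity $\sin^{3-d}$ is integrable over the sphere; since $|S_{d-k}|(W,\cdot)$ is a finite measure, \eqref{suffic} follows.

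The step I expect to be the main obstacle is the simultaneous interchange of the mean with the flag integration and the application of the signed-measure disintegration \eqref{E_disint2}: one must control the total variation of the product flag measure near pairs of parallel or antiparallel normals, $\angle(u,v)\to 0,\pi$, where $\sin^{3-d}$ is singular and where geometrically $Z$ and $W$ approach a touching configuration. Hypotheses \eqref{touch2} and \eqref{integr_E} are tailored to tame exactly this singularity, and the genuine work lies in making the Fubini and disintegration arguments rigorous for signed measures rather than in the algebra of the remaining terms.
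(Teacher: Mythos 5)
Your second and third paragraphs coincide with the paper's proof: the flag representation of the mixed measures (Proposition~\ref{P_mixed}), Fubini justified by \eqref{integr_E}, the disintegration \eqref{E_disint2}, and both sufficiency claims via the estimate \eqref{E_sph} bounding $F_k(\angle(u,v))|\varphi_k(u,U,v,V)|$ by $\const\cdot\sin^{3-d}\angle(u,v)$. The gap is in your first step, the reduction of $\E\chi(Z\cap W)$ to mixed functionals. The paper never introduces a large window $C$ nor a normalized limit: it applies the translative formula (Theorem~\ref{T_transl}) directly to the pair $(Z,W)$ with the weight $h(z,x)=\1_B(z)$, where $B$ is a fixed Borel set of \emph{unit volume}. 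This yields almost surely $\int_B\chi((Z+z)\cap W)\,dz=\sum_{k=0}^{d}\int \1_B(x-y)\,C_{k,d-k}(Z,W,d(x,y))$; taking expectations, stationarity makes the left-hand side \emph{exactly} $\E\chi(Z\cap W)$ (the integrand has constant expectation and $\cL^d(B)=1$), while on the right the spatial factor $\int\1_B(x-y)\,dx=1$ is absorbed by \eqref{E_disint2}. The hypotheses of Theorem~\ref{T_transl} for the pair $(Z,W)$ are obtained almost surely from \eqref{touch2} and \eqref{integr_E} by Fubini and stationarity ($\Pr[Z+x\text{ and }W\text{ touch}]=\Pr[Z\text{ and }W\text{ touch}]=0$ for every $x$). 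No boundary terms ever appear.

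Your window-limit route, by contrast, leaves two claims unsupported, and under the stated hypotheses it is not clear they can be supported. First, to apply the translative formula to $(Z\cap C,W)$ you need $Z\cap C$ to have locally positive reach a.s.\ and the pair $(Z\cap C,W)$ to satisfy \eqref{touch} and \eqref{bound}; but \eqref{touch2} and \eqref{integr_E} concern only $(Z,W)$, whereas $\nor(Z\cap C)$ carries extra mass on $\partial C$ and on $\partial Z\cap\partial C$, so these hypotheses do not transfer (for a fixed $C$, $\Pr[Z\text{ and }C\text{ touch}]$ need not vanish; one would have to randomize $C$ and, worse, control mixed measures of $Z$ with $\partial C$). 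Second, and more seriously, the assertion that boundary effects are ``negligible after normalization'' is precisely the hard part: it requires a bound such as $\E|\chi((Z\cap C)\cap(W+z))|\le M$ uniformly over the collar of $z$'s (of volume $O(\Ha^{d-1}(\partial C))$) where $W+z$ meets $\partial C$, i.e.\ control of expected \emph{total variations} of mixed curvature quantities of $Z$, $\partial C$ and $W$. Conditions \eqref{fin_dens} and \eqref{integr_E} say nothing about the interaction of $Z$ with $\partial C$; this is exactly the kind of additional moment assumption that Schneider and Weil must impose in the convex-ring analogue \cite[Theorem~9.4.1]{SW08} (a condition on the number of convex components per cell). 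So, as written, your first step fails; replacing the window limit by the unit-volume translation-averaging trick above repairs it, after which the rest of your argument goes through essentially verbatim.
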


\begin{remark}
A related formula for stationary random sets from the extended convex ring can be found in \cite[Theorem~9.4.1]{SW08}, where an integrability condition concerning the local number of its convex components is assumed, but the right-hand side is expressed as a sum of mixed functionals of $Z$ and $W$. Similar formulas can be derived in the setting of sets with positive reach and under integrability condition \eqref{fin_dens}. The main point of Theorem~\ref{T1} is that the mixed functionals are desintegrated using the flag measures, as shown in \cite{HRW13}. Such a disintegration is possible only under assumption \eqref{integr_E} which can be considered as a kind of general mutual position of $Z$ and $W$; if both $Z$ and $W$ contain parallel faces of sufficiently high dimension then this assumption can be violated, as \cite[Section~8]{HRW13} shows. If $d\leq 3$ then \eqref{suffic} (and, hence, also \eqref{integr_E}) is always satisfied.
\end{remark}

We will apply Theorem~\ref{T1} to excursion sets of stationary random fields with smooth realizations. If $\xi$ is a stationary random field in $\R^d$ which is $C^{1,1}$ almost surely and if $\alpha\in\R$ is its \emph{regular} value (i.e., $\nabla\xi(x)\neq 0$ whenever $\xi(x)=\alpha$) almost surely, then
$$Z_\alpha:=\{x\in\R^d:\, \xi(x)\geq\alpha\}$$
is a stationary random closed set.

We are mainly (but not only) interested in the case when $\xi$ is \emph{Gaussian} with zero mean; then the distribution of $\xi$ is characterized by its covariance function $C(x)=\E\xi(0)\xi(x)$, $x\in\R^d$. In order that $\xi$ be $C^{1,1}$ almost surely, it is necessary that the second order partial derivatives of $\xi$ exist in the $L^2$ sense (otherwise, the first order derivatives would be nondifferentiable almost everywhere and almost surely). This means that the covariance function $C$ has to be of class $C^4$. We refer to \cite[Ch.~2]{A81} and \cite[Ch.~1]{AT07} for smoothness of Gaussian random fields.

Our second main result is the form of the specific flag measure of the excursion sets of stationary centred Gaussian random fields. We need the following notation. If $\Lambda$ is a symmetric $d\times d$ matrix and $U\in G(d,k)$ ($1\leq k\leq d$), we denote
$$\Lambda[U]:=\det\left( \langle\Lambda u_i,u_j\rangle\right)_{i,j=1}^k,$$
where $\{u_1,\dots,u_k\}$ is an orthonormal basis of $U$ and $\langle\cdot,\cdot\rangle$ is the standard scalar product in $\R^d$. (It is easy to see that the determinant does not depend on the choice of the orthonormal basis; see Section~2 for details.)

We will denote by $\mu_k$ the rotation invariant probability measure on $F^\perp(d,k^*)$ which is given by
$$\int_{F^\perp(d,k^*)} g(u,U)\,\mu(d(u,U))=O_{d-1}^{-1}\int_{S^{d-1}}\int_{G^{u^\perp}(d-1,k^*)} g(u,U)\, dU\, \Ha^{d-1}(du),$$
where $O_{d-1}=\Ha^{d-1}(S^{d-1})$ denotes the $(d-1)$-dimensional Hausdorff measure of the unit sphere in $\R^d$.

\begin{theorem}  \label{T2}
Let $\xi$ be a stationary Gaussian random field with zero mean and such that $\xi$ is $C^{1,1}$ almost surely. Assume further that $\nabla\xi(0)$ has nondegenerate distribution. Then any $\alpha\in\R$ is a regular value of $\xi$ almost surely, and for any $0\leq k\leq d-1$, the specific flag measure $\overline{\Omega}_k(Z_\alpha,\cdot)$ of the excursion set $\Z_\alpha=\{\xi\geq\alpha\}$ is absolutely continuous w.r.t. $\mu_k$, with density
$$
q_k(u,U)=\frac{\beta_{d,k}^{-1}\;(2\pi)^{\frac{k-d-1}2}}{O_{d-1}\sigma^{d-k}\sqrt{\det\Lambda}}e^{-\frac{\alpha^2}{2\sigma^2}}
H_{k^*}\left(\frac\alpha\sigma\right)\left(u^T\Lambda^{-1}u\right)^{-\frac k2-1}\, 
\Lambda[U] ,\quad (u,U)\in F^\perp(d,k^*),
$$
where $\sigma^2:=C(0)$, $\Lambda=\left(\E\frac{\partial\xi}{\partial x_i}(0) \frac{\partial\xi}{\partial x_j}(0)\right)_{i,j=1}^d$ and $H_{k^*}$ is the Hermite polynomial of order $k^*$.
\end{theorem}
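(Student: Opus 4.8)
The plan is to reduce the claim to a single Gaussian computation on the level set $\{\xi=\alpha\}$ via the Kac--Rice/coarea formula, and then to evaluate the resulting conditional expectation using the independence structure of $(\xi,\nabla\xi,\nabla^2\xi)$ at a stationary point. First I would record the geometric reduction: since $\nabla\xi(0)$ has a nondegenerate Gaussian law, the event that some $x$ satisfies $\xi(x)=\alpha$ and $\nabla\xi(x)=0$ has probability zero (a codimension $d+1$ condition, whose expected count vanishes by a Kac--Rice/Bulinskaya argument), so $\alpha$ is a.s.\ regular and $\partial Z_\alpha=\{\xi=\alpha\}$ is a $C^{1,1}$ hypersurface of locally positive reach, with outer normal $u=-\nabla\xi/|\nabla\xi|$ and Weingarten map $S=|\nabla\xi|^{-1}\nabla^2\xi|_{u^\perp}$ defined $\Ha^{d-1}$-a.e. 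Using the representation of the flag measure of a $C^{1,1}$ set from Section~2, for a test function $g$ one has $\int g\,d\Gamma_k(Z_\alpha,\cdot)=c_{d,k}\int_{\partial Z_\alpha}\int_{G^{u^\perp}(d-1,k^*)}g(u,U)\det(S|_U)\,dU\,d\Ha^{d-1}$, where $\det(S|_U)$ is the compressed determinant of $S$ on the $k^*$-plane $U\subset u^\perp$, whose Grassmannian integral is the elementary symmetric function recovering $C_k$ as the marginal.

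Next I would take expectations and disintegrate. Applying Kac--Rice to the boundary integral and using stationarity (so the integrand's conditional law does not depend on $x$) turns $\E\int g\,d\Gamma_k$ into $\cL^d(W)$ times $c_{d,k}\,p_{\xi}(\alpha)\,\E\big[|\nabla\xi|\int g(u,U)\det(S|_U)\,dU\mid\xi=\alpha\big]$; this identifies $\overline\Omega_k$ and reduces matters to the density at each flag $(u,U)$. Writing $\det(S|_U)=|\nabla\xi|^{-k^*}\det(\nabla^2\xi|_U)$, and using that at a stationary point $\nabla\xi(0)\sim N(0,\Lambda)$ is independent of $\xi(0)$ and of $\nabla^2\xi(0)$ (all odd-order derivatives of the even function $C$ vanish at $0$), the expectation factors into a gradient part and a Hessian part. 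The gradient part $\E[|\nabla\xi|^{1-k^*}(\cdot)]$, computed in polar coordinates for $N(0,\Lambda)$, gives $(\det\Lambda)^{-1/2}(u^T\Lambda^{-1}u)^{-k/2-1}$ and the numerical constants: the radial integral of $r^{k+1}e^{-r^2\,u^T\Lambda^{-1}u/2}$ (here $d-k^*=k+1$) produces exactly the exponent $-k/2-1$. The density $p_\xi(\alpha)$ supplies $\sigma^{-1}e^{-\alpha^2/2\sigma^2}$.

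The crux, and the main obstacle, is the Hessian part: I must show $\E[\det(\nabla^2\xi|_U)\mid\xi=\alpha]=(-1)^{k^*}\sigma^{-k^*}\Lambda[U]\,H_{k^*}(\alpha/\sigma)$. Conditionally on $\xi=\alpha$, the matrix $\nabla^2\xi|_U$ is Gaussian with mean $-\tfrac{\alpha}{\sigma^2}\Lambda|_U$ (from $\E[\xi\,\partial_{ij}\xi]=\partial_{ij}C(0)=-\Lambda_{ij}$) and covariance $\partial_{ijkl}C(0)-\sigma^{-2}\Lambda_{ij}\Lambda_{kl}$. The difficulty is that the fourth-derivative tensor $\partial_{ijkl}C(0)$ is a free parameter of the field, whereas the answer must depend only on $\Lambda$. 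The decisive point is that $\partial_{ijkl}C(0)$ is fully symmetric in its four indices (equality of mixed partials), so in the Wick expansion of the expected determinant its contractions cancel against the antisymmetry of $\det$; concretely, decomposing the conditional Hessian into a part proportional to $\Lambda$ plus a centred fluctuation of ``GOE type'', one obtains $\E[\det(\cdot)]=\det(\Lambda|_U)\,\E_\zeta\mathrm{He}_{k^*}(\cdots)$ in which the fourth-order term drops, leaving the Hermite polynomial via $\E[(x+iZ)^{k^*}]=H_{k^*}(x)$ for $Z\sim N(0,1)$. I would verify this cancellation first for $k^*\le 2$ by hand (where $\partial_{iijj}C=\partial_{ijij}C$ gives it at once) and then in general through the decomposition.

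Finally I would assemble the pieces. Combining the gradient part, the Hessian part, $p_\xi(\alpha)$, the normalization $O_{d-1}^{-1}$ of $\mu_k$, and the constant $\beta_{d,k}$, the powers collapse as $\sigma^{-(k^*+1)}=\sigma^{-(d-k)}$ and $(2\pi)^{-(d+1)/2}\cdot(2\pi)^{k/2}=(2\pi)^{(k-d-1)/2}$, the factor $\Lambda[U]=\det(\Lambda|_U)$ comes directly from the conditional Hessian determinant, and $(u^T\Lambda^{-1}u)^{-k/2-1}$ from the gradient integral. This reproduces $q_k$, and the signs from the outward-normal convention cancel the $(-1)^{k^*}$ of the Hermite evaluation, leaving the stated nonnegative-orientation density. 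Absolute continuity with respect to $\mu_k$ is automatic since the representation already exhibits $\overline\Omega_k$ as a continuous density against $dU\,\Ha^{d-1}(du)$.
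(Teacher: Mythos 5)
Your proposal follows essentially the same route as the paper's proof: the boundary representation of the flag measure combined with the Weingarten--Hessian identity, a coarea/Kac--Rice reduction to a conditional expectation at the origin weighted by $\varphi(\alpha)$, the independence of $\nabla\xi(0)$ from $(\xi(0),\nabla^2\xi(0))$, a polar-coordinate computation giving $(\det\Lambda)^{-1/2}(u^T\Lambda^{-1}u)^{-k/2-1}$, and the Hermite evaluation of the conditional Hessian determinant --- which the paper obtains via the transformation $Z=\sigma\Lambda_U^{-1/2}Y\Lambda_U^{-1/2}$ and a citation of Adler--Taylor (Corollary~11.6.3), while you sketch a from-scratch Wick-expansion proof of the same cancellation. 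The argument is correct; the only point to tighten is the sign bookkeeping: with the outer normal $u=-\nabla\xi/|\nabla\xi|$, the Weingarten map in the positive-reach convention is the compression of $-\nabla^2\xi/|\nabla\xi|$, so the factor $(-1)^{k^*}$ appears already in $\det(S|_U)$ and cancels against the one in the Hessian expectation, exactly as you anticipate.
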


\begin{remark} As explained at the beginning of Section~\ref{GRF}, the assumption that the paths are $C^{1,1}$ a.s. implies already $C^2$-smoothness a.s. in the case of stationary Gaussian random fields. When relaxing either stationarity, or Gaussianity, examples of random fields which are $C^{1,1}$ and not $C^2$ can be obtained (see, e.g., \cite[Example~12]{LR19}). For excursion sets of such random fields, Theorem~1 can also be applied, but the formula would be less tractable in the general case.
\end{remark}

Inserting the density from Theorem~\ref{T2} into the formula of Theorem~\ref{T1}, we obtain the following

\begin{corollary}\label{C2}
Under the assumptions of Theorem~\ref{T2}, the excursion set $Z_\alpha=\{\xi\geq\alpha\}$ fulfills for any $0\leq k\leq d$ and any compact set $W\subset\R^d$ with positive reach
\begin{align*}
\E \chi(Z_\alpha\cap W)&=\overline{V}_0(Z_\alpha)\cL^d(W)+\overline{V}_d(Z_\alpha)\chi(W)\\
+&\sum_{k=1}^{d-1}\int\int F_k(\angle(u,v))\, \varphi_k(u,U,v,V)\, \Omega_{d-k}(W,d(v,V))\, q_k(u,U)\,\mu_k(d(u,U)).
\end{align*}
\end{corollary}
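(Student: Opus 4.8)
The plan is to read Corollary~\ref{C2} directly off Theorem~\ref{T1}, applied to the pair $(Z_\alpha,W)$, with the specific flag measure of $Z_\alpha$ replaced by the explicit expression furnished by Theorem~\ref{T2}. Concretely, once all hypotheses of Theorem~\ref{T1} are checked for $Z_\alpha$, it remains only to substitute $\overline{\Omega}_k(Z_\alpha,d(u,U))=q_k(u,U)\,\mu_k(d(u,U))$ into each summand $k=1,\dots,d-1$ of the formula of Theorem~\ref{T1}, leaving the two boundary terms $\overline{V}_0(Z_\alpha)\cL^d(W)$ and $\overline{V}_d(Z_\alpha)\chi(W)$ untouched, and the claimed identity follows. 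The entire content of the proof is therefore the verification of the hypotheses; the substitution itself is bookkeeping.

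First I would collect what Theorem~\ref{T2} already guarantees. Nondegeneracy of $\nabla\xi(0)$ makes $\alpha$ a.s.\ a regular value, so $\{\xi=\alpha\}$ is a.s.\ a $C^{1,1}$ hypersurface and $Z_\alpha$ has locally positive reach; stationarity of $Z_\alpha$ is inherited from that of $\xi$. The finiteness of $\overline{\Omega}_k(Z_\alpha,\cdot)$ with bounded density $q_k$, established in Theorem~\ref{T2}, is proved alongside the local finiteness \eqref{fin_dens} of $\E|C_k|(Z_\alpha,\cdot)$, since $C_k(Z_\alpha,\cdot)$ is the $F^\perp(d,k^*)$-marginal of $\Gamma_k(Z_\alpha,\cdot)$. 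For the integrability hypothesis \eqref{integr_E} I would invoke the sufficient condition stated at the end of Theorem~\ref{T1}. Because $\Lambda$, and hence $\Lambda^{-1}$, is positive definite, $u^T\Lambda^{-1}u$ is bounded away from $0$ on $S^{d-1}$, so $q_k(u,U)=\const\cdot(u^T\Lambda^{-1}u)^{-k/2-1}\,\Lambda[U]$ is continuous, hence bounded, on the compact flag space $F^\perp(d,k^*)$. Its $S^{d-1}$-marginal, the specific area measure $\overline{S}_k(Z_\alpha,\cdot)$, then has bounded density with respect to the uniform distribution on $S^{d-1}$, which is precisely the situation in which the final assertion of Theorem~\ref{T1} yields \eqref{suffic} and therefore \eqref{integr_E}.

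The delicate hypothesis, and the main obstacle, is the non-touching condition \eqref{touch2}. Here I would argue that $\partial Z_\alpha=\{\xi=\alpha\}$ a.s.\ does not touch $W$. A touching point $x\in\partial W$ forces $\xi(x)=\alpha$ together with alignment of $\nabla\xi(x)$ with a normal of $W$ at $x$; on the smooth part of $\partial W$ this means the tangential gradient of $\xi|_{\partial W}$ vanishes at a point of the level $\{\xi=\alpha\}$, i.e.\ $x$ is a critical point of $\xi|_{\partial W}$ lying exactly at level $\alpha$. These are $d$ scalar constraints on the $(d-1)$-dimensional surface $\partial W$, so a Kac--Rice count, using the nondegeneracy of the joint law of $(\xi,\nabla\xi)$, shows the expected number of such points is zero; the lower-dimensional singular strata of $\partial W$ are treated analogously with more codimension to spare. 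Hence $\Pr[Z_\alpha\text{ and }W\text{ touch}]=0$. With this measure-zero estimate in hand all hypotheses of Theorem~\ref{T1} are met, and the substitution of $q_k\,\mu_k$ in each summand completes the proof.
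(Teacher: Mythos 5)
Your overall route is exactly the paper's: apply Theorem~\ref{T1} to the pair $(Z_\alpha,W)$ and substitute $\overline{\Omega}_k(Z_\alpha,\cdot)=q_k\,\mu_k$ from Theorem~\ref{T2}. Your verification of \eqref{integr_E} is also the intended one: $q_k$ is continuous, hence bounded, on the compact flag space (positive definiteness of $\Lambda$ bounds $u^T\Lambda^{-1}u$ away from $0$), so the specific area measure has bounded density on $S^{d-1}$ and the last assertion of Theorem~\ref{T1} gives \eqref{suffic}, hence \eqref{integr_E}; likewise \eqref{fin_dens} comes with the finiteness built into Theorem~\ref{T2} (via the bound of Lemma~\ref{L_nabla2}).

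The gap is in your verification of the touching condition \eqref{touch2}, which you yourself identify as the main obstacle. You argue on ``the smooth part of $\partial W$'' and then on ``lower-dimensional singular strata,'' but $W$ is only a compact set with positive reach: its boundary admits no such stratification in general (the singular set can be dense, there need not be any relatively open smooth piece of $\partial W$, and the singular part need not consist of manifolds of any dimension). Moreover, the touching condition is not ``alignment of $\nabla\xi(x)$ with a normal'' at boundary points of $W$ in the naive sense: at a singular point the relevant condition is $-\nabla\xi(x)/|\nabla\xi(x)|\in\Nor(W,x)\cap S^{d-1}$, and this normal cone can be of dimension complementary to that of the ``stratum,'' so the codimension bookkeeping must be carried out on the normal bundle, not on $\partial W$; your claim of ``more codimension to spare'' on singular parts is not how the count actually works. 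Finally, a Kac--Rice count needs regularity that a $C^{1,1}$ field restricted to a merely Lipschitz parameter set does not obviously provide. The paper resolves all three issues at once: for \emph{any} set of positive reach, $\nor W$ is a compact $(d-1)$-dimensional Lipschitz manifold and Ahlfors lower $(d-1)$-regular, so one can define the Lipschitz random field $\eta(x,v):=(\xi(x)-\alpha,\,u(x)+v)$ on $\nor W$ and apply Lemma~\ref{L_aux}: since $\xi(x)$ has bounded density on $\R$, $u(x)=\nabla\xi(x)/|\nabla\xi(x)|$ has bounded density on $S^{d-1}$, and the two are independent, one gets $\Pr[|\eta(x,v)|\le\delta]\le C\delta^{d}$ with $d>d-1$, whence $\eta$ a.s.\ has no zero on $\nor W$ --- which is precisely non-touching. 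To make your proof complete for arbitrary compact $W$ with positive reach, the stratified Kac--Rice sketch should be replaced by this normal-bundle argument (or an equivalent quantitative zero-counting lemma for Lipschitz fields).
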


We are able to simplify the formula from Corollary~\ref{C2} significantly only if $W$ is a \emph{zonotope} (i.e., finite Minkowski sum of segments). We can assume without loss of generality that the segments generate at the origin, i.e., that $0$ is a vertex of $W$. Let $\cF_j^0(W)$ denote the set of all $j$-faces of $W$ containing the origin, $j=0,\dots,d$, and given $F\in\cF_j^0(W)$, set $|F|:=\cL^j(F)$ and let $F_0\in G(d,j)$ be the linear hull of $F$. The following result extends \cite[Theorem~11.7.2]{AT07} concerned with a cuboid $W$ and proved under slightly more restrictive assumptions on $\xi$.

\begin{corollary}  \label{C4}
Let $\xi$ be as in Theorem~2 and $W\subset\R^d$ a zonotope with a vertex at the origin. Then
$$\E\chi(Z_\alpha\cap W)=\sum_{k=0}^{d-1} \frac{1}{(2\pi)^{\frac{d-k+1}2}\sigma^{d-k}}e^{-\frac{\alpha^2}{2\sigma^2}}H_{k^*}\left(\frac\alpha\sigma\right)\sum_{F\in\cF_{d-k}^0(W)} |F| \sqrt{\Lambda[F_0]}+\psi\left(\frac{\alpha}{\sigma}\right),$$
with $\psi(t)=(2\pi)^{-1/2}\int_t^\infty e^{-s^2/2}\, ds$.
\end{corollary}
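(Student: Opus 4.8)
The strategy is to insert the explicit flag density of Theorem~\ref{T2} into Corollary~\ref{C2} and then evaluate the integrals using the product structure of a zonotope. The starting point is a description of the flag measures $\Omega_{d-k}(W,\cdot)$. Writing $W=\sum_{i=1}^n[0,s_i]$ with $0$ a vertex, the $(d-k)$-faces of $W$ fall into classes of mutually parallel faces; the faces in a class share the linear hull $F_0\in G(d,d-k)$ and the common $(d-k)$-volume $|F|$, while their normal cones, being the normal cones of the vertices of the projection of $W$ onto $F_0^\perp$, tile the whole $k$-dimensional space $F_0^\perp$. Hence the contribution of one class to the directional flag measure $\Omega_{d-k}(W,\cdot)$ is, up to a dimensional normalising constant, the measure carried by the flags $(v,V)$ with $v\in F_0^\perp\cap\sph$ and $V=F_0^\perp\cap v^\perp$, with $\Ha^{k-1}$-density in $v$ and total weight $|F|$. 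Representing each class by its face through the origin produces exactly the index set $\cF_{d-k}^0(W)$ of the statement.

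First I would dispose of the two terms that are not in the sum over $1\le k\le d-1$. Since $W$ is convex, $\chi(W)=1$, and because $\xi(0)$ is centred Gaussian with variance $\sigma^2$ one has $\overline{V}_d(Z_\alpha)=\Pr[\xi(0)\ge\alpha]=\psi(\alpha/\sigma)$; thus $\overline{V}_d(Z_\alpha)\chi(W)=\psi(\alpha/\sigma)$, the last summand in the claim. The term $\overline{V}_0(Z_\alpha)\cL^d(W)$ furnishes the missing $k=0$ summand: the only $d$-face is $W$ itself, so $\cF_d^0(W)=\{W\}$ with $F_0=\R^d$ and $\sqrt{\Lambda[\R^d]}=\sqrt{\det\Lambda}$, and it remains to identify $\overline{V}_0(Z_\alpha)=\int q_0\,d\mu_0$ with $(2\pi)^{-(d+1)/2}\sigma^{-d}e^{-\alpha^2/2\sigma^2}H_{d-1}(\alpha/\sigma)\sqrt{\det\Lambda}$; this is the $k=0$, $F_0=\R^d$ instance of the integral evaluated below.

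For $1\le k\le d-1$, substituting the flag measure of the first step and the density $q_k$ into the $k$-th summand of Corollary~\ref{C2} turns it into
\[
\sum_{F\in\cF_{d-k}^0(W)}|F|\int_{F^\perp(d,k^*)}q_k(u,U)\left(\int_{F_0^\perp\cap\sph}F_k(\angle(u,v))\,\varphi_k\!\left(u,U,v,F_0^\perp\cap v^\perp\right)\Ha^{k-1}(dv)\right)\mu_k(d(u,U)).
\]
The factors $e^{-\alpha^2/2\sigma^2}H_{k^*}(\alpha/\sigma)$, $\sigma^{-(d-k)}$ and $(2\pi)^{(k-d-1)/2}=(2\pi)^{-(d-k+1)/2}$ are pulled out of $q_k$ unchanged and already reproduce the coefficient claimed in the corollary. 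The entire problem is therefore reduced to the purely geometric identity
\[
\frac{\beta_{d,k}^{-1}}{O_{d-1}\sqrt{\det\Lambda}}\int_{F^\perp(d,k^*)}\left(u^T\Lambda^{-1}u\right)^{-\frac k2-1}\Lambda[U]\left(\int_{F_0^\perp\cap\sph}F_k(\angle(u,v))\,\varphi_k(\cdots)\,\Ha^{k-1}(dv)\right)\mu_k(d(u,U))=\sqrt{\Lambda[F_0]},
\]
up to the normalising constant of the zonotope flag measure.

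Establishing this identity is the main obstacle. I would use the rotation invariance of $\mu_k$ to bring $F_0$ into a standard position, insert the explicit Section~2 expressions for $F_k$ and $\varphi_k$, and reduce the coupled integral over $(u,U)$ and $(v,V)$ by a Blaschke--Petkantschin-type change of variables on the flag spaces, the aim being to collapse the two $\Lambda$-weights $\Lambda[U]$ and $(u^T\Lambda^{-1}u)^{-\frac k2-1}$ to the single determinant $\sqrt{\Lambda[F_0]}$ and to cancel $\beta_{d,k}$ and the spherical constants. A useful independent check on the whole computation is that the $k$-th summand must equal $|F|$ times the top Euler-characteristic density of the Gaussian field restricted to the $(d-k)$-dimensional direction $F_0$, which by the Gaussian kinematic formula is precisely $(2\pi)^{-(d-k+1)/2}\sigma^{-(d-k)}e^{-\alpha^2/2\sigma^2}H_{k^*}(\alpha/\sigma)\sqrt{\Lambda[F_0]}$; this both confirms the target and pins down every constant.
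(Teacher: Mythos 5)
Your setup is sound and, up to a point, parallels the paper: you correctly dispose of the two boundary terms ($\chi(W)=1$ and $\overline{V}_d(Z_\alpha)=\Pr[\xi(0)\geq\alpha]=\psi(\alpha/\sigma)$; the $k=0$ term absorbed via $\cF_d^0(W)=\{W\}$ and $\Lambda[\R^d]=\det\Lambda$), and you spot the key zonotope fact that the normal cones of the faces parallel to a given $F\in\cF_{d-k}^0(W)$ tile $F_0^\perp$. However, your description of the window's flag measure is incorrect: by \eqref{def_Gamma}, the contribution of a face $F$ to $\Omega_{d-k}(W,\cdot)$ is \emph{not} carried by the flags $(v,F_0^\perp\cap v^\perp)$; it is spread over all $V\in G^{v^\perp}(d-1,k-1)$ with density $\langle F^\perp\cap v^\perp,V\rangle^2\,dV$. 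This is not cosmetic. The paper's argument works precisely because this quadratic-weight form lets one integrate out \emph{both} $U$ and $V$ using only the defining property \eqref{E_varphi} of $\varphi_k$, producing terms $\|F^\perp\wedge B_I\wedge u\|^2$ without ever invoking $\varphi_k$ explicitly (its explicit form is not even stated in the paper, only cited from \cite{HRW13}). With your concentrated measure you are forced to evaluate $\varphi_k(u,U,v,F_0^\perp\cap v^\perp)$ pointwise inside the integral --- exactly the computation your plan then cannot carry out.

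That points to the main gap: the entire analytic content of the corollary is packed into your displayed ``geometric identity,'' which you declare to be the main obstacle and do not prove; naming tools (rotation invariance of $\mu_k$, explicit $F_k$ and $\varphi_k$, a Blaschke--Petkantschin change of variables) is not a proof, and the proposed ``check'' via the Gaussian kinematic formula is circular here, since the corollary is meant to recover the Adler--Taylor result by an independent, purely integral-geometric method. The paper's route through this computation consists of concrete steps, none of which appear in your plan: (a) expand $\Lambda[U]=\sum_{|I|=k^*}\lambda_I\langle U,B_I\rangle^2$ via Lemma~\ref{L_bilin} and apply \eqref{E_varphi} to collapse the double Grassmannian integral; (b) recombine $\sum_{|I|=k^*}\lambda_I\|F^\perp\wedge B_I\wedge u\|^2=\Lambda[F_0]\,(p_{F_0}u)^T\Lambda^{-1}(p_{F_0}u)$, again by Lemma~\ref{L_bilin} and \eqref{E_u^perp}; (c) use the tiling of the unit sphere of $F_0^\perp$ by the cones $\Gamma_{F'}$, $F'\| F$, together with \cite[Eq.~(6.15)]{RZ19} to evaluate $\sum_{F'\|F}\int_{\Gamma_{F'}}F_k(\angle(u,v))\,dv=|p_{F_0}u|^{k-d}/O_{d-k-1}$, which is how $F_k$ and the $v$-integration disappear; and (d) compute the remaining spherical integral $\int_{S^{d-1}}(p_{F_0}u)^T\Lambda^{-1}(p_{F_0}u)\,|p_{F_0}u|^{k-d}(u^T\Lambda^{-1}u)^{-\frac k2-1}\,du$ by evaluating the Gaussian expectation $\E\,(p_{F_0}X)^T\Lambda^{-1}(p_{F_0}X)/|p_{F_0}X|^{d-k}$, $X\sim N(0,\Lambda)$, in two ways, which is what produces $\sqrt{\Lambda[F_0]}$ with the correct constants. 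Until you supply substitutes for (a)--(d), your proposal is a reduction of the corollary to an unproved statement (and, because of the flag-measure misdescription, to a slightly wrong statement), not a proof.
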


The $k$th curvature density can be obtained as the total $k$th flag measure:
$$\overline{V}_k(Z_\alpha)=\overline{\Omega}_k(Z_\alpha,F^\perp(d,k^*))=\int_{F^\perp(d,k^*)}q_k(u,U)\,\mu_k(d(u,U)).$$
This quantity is particularly useful for isotropic random fields when it can be inserted in the principal kinematic formula \eqref{PKF}. Note that (2) together with the following Corollary~\ref{C3} yields a formula which agrees with \cite[Corollary~11.7.3]{AT07} in the case when $W$ is a cube.

\begin{corollary} \label{C3}
Let $\xi$ be as in Theorem~\ref{T2} and assume moreover that $\xi$ is isotropic. Then, for any $\alpha\in\R$ and $0\leq k\leq d-1$, the $k$th curvature density of the excursion set $Z_\alpha$ equals
$$\overline{V}_k(Z_\alpha)=\frac{\beta_{d,k}^{-1}\;(2\pi)^{\frac{k-d-1}2}}{\sigma^{d-k}}\lambda^{\frac{d-k}2}e^{-\frac{\alpha^2}{2\sigma^2}}
H_{k^*}\left(\frac\alpha\sigma\right),$$
where $\lambda:=\E\left(\frac{\partial\xi}{\partial x_1}(0)\right)^2$.
\end{corollary}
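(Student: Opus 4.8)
The plan is to start from the representation of the curvature density as the total mass of the specific flag measure, $\overline{V}_k(Z_\alpha)=\overline{\Omega}_k(Z_\alpha,F^\perp(d,k^*))=\int_{F^\perp(d,k^*)}q_k(u,U)\,\mu_k(d(u,U))$, and to substitute the explicit density $q_k$ provided by Theorem~\ref{T2}. The only new input is isotropy, whose sole effect is to fix the matrix $\Lambda$. If $\xi$ is isotropic, then $\xi\circ R\stackrel{d}{=}\xi$ for every $R\in\SOd$, and since $\nabla(\xi\circ R)(0)=R^T\nabla\xi(0)$, the law of $\nabla\xi(0)$ is rotation invariant. Consequently its covariance matrix satisfies $\Lambda=R^T\Lambda R$ for all $R\in\SOd$, which forces $\Lambda$ to be scalar; reading off a diagonal entry gives $\Lambda=\lambda I_d$ with $\lambda=\E(\partial\xi/\partial x_1(0))^2$, the constant in the statement.

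With $\Lambda=\lambda I_d$ the three $\Lambda$-dependent factors in $q_k$ reduce to pure powers of $\lambda$. One has $\sqrt{\det\Lambda}=\lambda^{d/2}$; for a unit vector $u$, $u^T\Lambda^{-1}u=\lambda^{-1}$, so $(u^T\Lambda^{-1}u)^{-k/2-1}=\lambda^{k/2+1}$; and for an orthonormal basis $\{u_1,\dots,u_{k^*}\}$ of $U\in G(d,k^*)$, $\langle\Lambda u_i,u_j\rangle=\lambda\delta_{ij}$, so $\Lambda[U]=\lambda^{k^*}$. Combining these, the net exponent of $\lambda$ in $q_k$ is $-\tfrac d2+(\tfrac k2+1)+k^*=-\tfrac d2+\tfrac k2+1+(d-1-k)=\tfrac{d-k}2$, which matches the factor $\lambda^{(d-k)/2}$ in Corollary~\ref{C3}.

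The decisive point is that after this substitution every dependence of $q_k$ on the flag $(u,U)$ has vanished, so $q_k$ is constant on $F^\perp(d,k^*)$. Since $\mu_k$ is a probability measure, integrating this constant merely returns it, whence $\overline{V}_k(Z_\alpha)$ equals the constant value of $q_k$; assembling the scalar prefactors $\beta_{d,k}^{-1}$, $(2\pi)^{(k-d-1)/2}$, $\sigma^{-(d-k)}$, $e^{-\alpha^2/(2\sigma^2)}$, and $H_{k^*}(\alpha/\sigma)$ with the power $\lambda^{(d-k)/2}$ then yields the asserted formula. I foresee no genuine obstacle: the entire role of isotropy is to make $\Lambda$ scalar, which removes the $(u,U)$-dependence and collapses the flag-space integral to a single constant. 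The only thing to watch is the routine bookkeeping of the dimensional normalizing constants (the factor $O_{d-1}$ and the normalization of $\mu_k$).
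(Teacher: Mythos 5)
Your overall route is the same as the paper's: both start from $\overline{V}_k(Z_\alpha)=\int q_k\,d\mu_k$, and both use isotropy only through the fact that $\Lambda=\lambda I_d$, which gives $\sqrt{\det\Lambda}=\lambda^{d/2}$, $u^T\Lambda^{-1}u=\lambda^{-1}$, $\Lambda[U]=\lambda^{k^*}$; your exponent count $-\tfrac d2+(\tfrac k2+1)+k^*=\tfrac{d-k}2$ is correct, and your argument that isotropy forces $\Lambda$ to be scalar is fine. (The paper first derives a general anisotropic formula, Eq.~\eqref{Vk}, by integrating $\Lambda[U]$ over the Grassmannian via Lemma~\ref{L_bilin} and the identity $\int\langle U,V\rangle^2\,dU=\binom{d-1}{k}^{-1}$, and only then specializes; in the isotropic case this collapses to exactly your computation, so that difference is one of generality, not of substance.)

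There is, however, a genuine unresolved point, and it is precisely the one you set aside as ``routine bookkeeping.'' If you take the density $q_k$ of Theorem~\ref{T2} and the measure $\mu_k$ literally --- $q_k$ carries the factor $O_{d-1}^{-1}$, and $\mu_k$ is defined as a \emph{probability} measure --- then in the isotropic case the constant value of $q_k$ is
$$\frac{\beta_{d,k}^{-1}\,(2\pi)^{\frac{k-d-1}2}}{O_{d-1}\,\sigma^{d-k}}\,\lambda^{\frac{d-k}2}\,e^{-\frac{\alpha^2}{2\sigma^2}}\,H_{k^*}\!\left(\frac\alpha\sigma\right),$$
and since integrating a constant against a probability measure returns that constant, your argument as written yields the asserted formula \emph{divided by} $O_{d-1}$; Corollary~\ref{C3} contains no such factor, and no step you describe recovers it. What actually happens (see the final display in the proof of Theorem~\ref{T2}, or Eq.~\eqref{Vk}) is that $q_k$ as displayed is the density of $\overline{\Omega}_k(Z_\alpha,\cdot)$ with respect to the \emph{unnormalized} measure $\Ha^{d-1}\otimes dU$ on $F^\perp(d,k^*)$, whose total mass is $O_{d-1}$; equivalently, the density with respect to the probability measure $\mu_k$ is $O_{d-1}q_k$. (This is a normalization mismatch between the statement of Theorem~\ref{T2} and the definition of $\mu_k$, which the paper's own computation of $\overline{V}_k$ silently corrects by integrating over the sphere with unnormalized $\Ha^{d-1}$.) With the correct normalization the flag-space integral contributes the factor $O_{d-1}$ that cancels the one in the denominator, and Corollary~\ref{C3} follows. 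To close your proof you must either make this normalization explicit, or bypass the statement of Theorem~\ref{T2} and take $g\equiv 1$ directly in the last display of its proof; as written, your constant does not come out right.
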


We would like to stress that our method of proof is completely different from that of Adler and Taylor \cite{AT07}, where the sectioning body (cuboid) $W$ is considered as a stratified manifold and the Euler characteristic of $Z_\alpha$ within $W$ is computed using the Morse formula of differential geometry. In contrary, we are applying purely integral-geometric methods and work with flag measures which extend the curvature measures. Both methods can be applied in a more general framework. We believe that our method base on flag measures has a good potential of further progress in different settings.

Formulas for random fields with $C^{1,1}$ realizations in $\R^2$ have been obtained recently in \cite{LR19}.

\subsection*{Acknowledgement} It is a pleasure to thank D. Pokorn\'y for helpful conversations.

\section{Preliminaries}
Our basic setting is the Euclidean space $\R^d$ with scalar product $\langle\cdot,\cdot\rangle$ and norm $|\cdot|$.
We will also use the norm of simple multivectors: 
$$\|v_1\wedge\dots\wedge v_k\|^2=\left(\det\left(\langle v_i,v_j\rangle\right)_{i,j=1}^k\right)^2,\quad v_1,\dots,v_k\in\R^d.$$
The symbol $G(d,k)$ denotes the Grassmannian of $k$-dimensional linear subspaces of $\R^d$. Elements of $G(d,k)$ can be identified with unit simple multivectors, modulo the sign. Using this convention, we write e.g.\ for $U\in G(d,k)$ and $V\in G(d,l)$ 
$$\|U\wedge V\|^2=\|u_1\wedge\dots\wedge u_k\wedge v_1\wedge\dots \wedge v_l\|^2,$$
where $\{u_1,\dots,u_k\}$, $\{v_1,\dots,v_l\}$ are any orthonormal bases of $U$, $V$, respectively (the squared norm is independent of the choice of the orthonormal bases). If $U,V\in G(d,k)$ are linear subspaces of the same dimension we write
$$\langle U,V\rangle^2:=\|U\wedge V^\perp\|^2=\left(\det\left(\langle u_i,v_j\rangle\right)_{i,j=1}^k\right)^2.$$
(In fact, $\langle U,V\rangle$ is a scalar product in the space of $k$-vectors.)

If $L:\R^d\to\R^d$ is a linear mapping and $U\in G(d,k)$, we denote by $L_U$ the restriction of $p_U\circ L$ to $U$ ($p_U$ is the orthogonal projection to $U$); hence, $L_U$ is a linear mapping from $U$ to $U$. Note that $L_U$ is selfadjoint whenever $L$ is. We set
$$L[U]:=\det L_U = \det\left(\langle Lu_i,u_j\rangle\right)_{i,j=1}^k,$$
where $\{u_1,\dots,u_k\}$ is an orthonormal basis of $U$ (of course, the determinant is independent of the choice of the orthonormal basis).

\begin{lemma} \label{L_bilin}
Let $L:\R^d\to\R^d$ be linear selfadjoint with eigenvalues $\lambda_i$ and corresponding eigenvectors $b_i$, $i=1,\dots,d$. Then for any $U\in G(d,k)$,
$$L[U]=\sum_{|I|=k}\lambda_I\langle U,B_I\rangle^2,$$
where $\Lambda_I:=\prod_{i\in I}\lambda_i$, $B_I:=\Lin\{b_i:\, i\in I\}$ and the summation is taken over all index sets $I\subset\{1,\dots,d\}$ of cardinality $k$.

In particular, if $L$ is regular and $U=u^\perp$ is the $(d-1)$-subspace perpendicular to a unit vector $u$, we obtain 
\begin{equation}  \label{E_u^perp}
L[u^\perp]=(\det L) \, u^T\Lambda^{-1}u.
\end{equation}
\end{lemma}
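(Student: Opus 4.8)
The plan is to reduce both assertions to the spectral decomposition of $L$ together with the Cauchy--Binet formula, and then to recognize the resulting minors as the squared Grassmann inner products $\langle U,B_I\rangle^2$ introduced earlier in this section.

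First I would fix an orthonormal basis $\{u_1,\dots,u_k\}$ of $U$ and collect it into the $d\times k$ matrix $P=[u_1|\cdots|u_k]$, so that by the definition of $L[U]$ one has $L[U]=\det(P^\top L P)$, this being the matrix of the self-adjoint restriction $L_U$ in the chosen basis. Writing the spectral decomposition $L=BDB^\top$ with $B=[b_1|\cdots|b_d]$ orthogonal and $D=\diag(\lambda_1,\dots,\lambda_d)$, I would set $Q:=B^\top P$, a $d\times k$ matrix with entries $Q_{ij}=\langle b_i,u_j\rangle$. Then $P^\top L P=Q^\top D Q$, whence $L[U]=\det(Q^\top D Q)$.

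The next step is purely algebraic. Applying the Cauchy--Binet formula to $\det(Q^\top D Q)$ (taking $A=Q^\top D$ and $B=Q$) and factoring the diagonal entries out of each maximal minor yields
$$\det(Q^\top D Q)=\sum_{|I|=k}\Big(\prod_{i\in I}\lambda_i\Big)(\det Q_I)^2=\sum_{|I|=k}\lambda_I(\det Q_I)^2,$$
where $Q_I$ is the $k\times k$ submatrix of $Q$ formed by the rows indexed by $I$. It then remains to identify $(\det Q_I)^2$ with $\langle U,B_I\rangle^2$: since $\{b_i:i\in I\}$ is an orthonormal basis of $B_I$ and $\{u_1,\dots,u_k\}$ one of $U$, the matrix $Q_I=(\langle b_i,u_j\rangle)_{i\in I,\,j}$ is exactly the matrix of inner products occurring in the definition $\langle U,B_I\rangle^2=(\det(\langle u_j,b_i\rangle))^2$, up to a transpose and a reordering of $I$, both of which leave the squared determinant unchanged. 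This establishes the general formula.

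For the particular case $k=d-1$ with $U=u^\perp$ and $L$ regular, I would specialize the sum: the index sets of cardinality $d-1$ are the complements $I=\{1,\dots,d\}\setminus\{m\}$, so that $\lambda_I=(\det L)/\lambda_m$ and $B_I=b_m^\perp$. The only remaining point is the codimension-one duality $\langle u^\perp,b_m^\perp\rangle^2=\langle u,b_m\rangle^2$, which I would verify directly from $\langle u^\perp,b_m^\perp\rangle^2=\|u^\perp\wedge b_m\|^2$ by completing an orthonormal basis of $u^\perp$ with $u$ and computing the resulting $d\times d$ determinant, which equals $\langle u,b_m\rangle$. Substituting and using $L^{-1}=\sum_m\lambda_m^{-1}b_mb_m^\top$, hence $u^\top L^{-1}u=\sum_m\lambda_m^{-1}\langle u,b_m\rangle^2$, collapses the sum to $(\det L)\,u^\top L^{-1}u$, which is the asserted identity \eqref{E_u^perp}. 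The routine matrix manipulations are straightforward; the main thing to get right is the bookkeeping that matches the Cauchy--Binet minors $\det Q_I$ to the Grassmann inner products $\langle U,B_I\rangle$ exactly as defined here, together with the codimension-one identity used in the second assertion.
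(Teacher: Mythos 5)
Your proof is correct, and it follows the same overall strategy as the paper---diagonalize $L$ in an orthonormal eigenbasis and identify the resulting squared minors with the Grassmann products $\langle U,B_I\rangle^2$---but the key combinatorial step is handled by a different device. Where you factor $L[U]=\det(Q^T D Q)$ and invoke the Cauchy--Binet formula, the paper expands the determinant $\det\left(\sum_l \lambda_l\langle u_i,b_l\rangle\langle u_j,b_l\rangle\right)_{i,j=1}^k$ by hand as a multiple sum over permutations, observes that terms with repeated indices $l_i=l_j$ cancel, and regroups the surviving terms (via the substitutions $\sigma_1=\pi^{-1}$, $\sigma_2=\sigma\circ\pi^{-1}$) into a product of two equal determinants; in effect the paper re-proves exactly the instance of Cauchy--Binet that you cite as a black box. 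Your route is shorter and less error-prone, at the cost of resting on a classical formula rather than being self-contained; both hinge on the same implicit fact that the $b_i$ may be taken orthonormal since $L$ is selfadjoint. For the second assertion your treatment is actually more detailed than the paper's: the paper merely records the identities $\lambda_I=(\det\Lambda)\lambda_i^{-1}$ and $\langle u^\perp,B_I\rangle^2=\langle u,b_i\rangle^2$ without verification, whereas you prove the codimension-one duality $\langle u^\perp,b_m^\perp\rangle^2=\langle u,b_m\rangle^2$ by the wedge-product computation and then collapse the sum using $u^T L^{-1}u=\sum_m\lambda_m^{-1}\langle u,b_m\rangle^2$, which is precisely what the paper leaves implicit.
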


\begin{proof}
Let $(u_1,\dots,u_k)$ be an orthonormal basis of $U$. We have
\begin{align*}
L[U]&=
\det \left(\langle Lu_i,u_j\rangle\right)_{i,j=1}^k
=\det\left( \sum_{l=1}^d\lambda_l\langle u_i,b_l\rangle\langle u_j,b_l\rangle\right)_{i,j=1}^k\\
&=\sum_{\sigma\in\Sigma(k)}(\sgn\sigma)\prod_{i=1}^k
\sum_{l=1}^d\lambda_l\langle u_i,b_l\rangle\langle u_{\sigma(i)},b_l\rangle\\
&=\sum_{l_1=1}^d\dots\sum_{l_k=1}^d\lambda_{l_1}\dots\lambda_{l_k}\sum_{\sigma\in\Sigma(k)}(\sgn\sigma)\prod_{i=1}^k\langle u_i,b_{l_i}\rangle\langle u_{\sigma(i)},b_{l_i}\rangle.\\
\end{align*}
If in the multiple sum $l_i=l_j$ for some $1\leq i\neq j\leq k$, the corresponding terms vanishes (since the corresponding terms for different permutations interchanging $i$ and $j$ cancel out). Thus the sum reduces to $k$-tuples of different $l_i$'s, and we can express it as a sum over subsets of $k$ elements and over permutations of these subsets:
$$
L[U]=\sum_{|I|=k}\lambda_I \sum_{\sigma\in\Sigma(k)} \sum_{\pi\in\Sigma(k)}(\sgn\sigma) \prod_{i=1}^k\langle u_i,b_{l_{\pi(i)}}\rangle\langle u_{\sigma(i)},b_{l_{\pi(i)}}\rangle.
$$
Applying the substitutions $j=\pi(i)$, $\sigma_1=\pi^{-1}$ and $\sigma_2=\sigma\circ\pi^{-1}$ (with $\sgn\sigma_2=\sgn\sigma\sgn\pi$), we get
\begin{align*}
L[U]&=\sum_{|I|=k}\lambda_I \sum_{\sigma_1\in\Sigma(k)} \sum_{\sigma_2\in\Sigma(k)}(\sgn\sigma_1)(\sgn\sigma_2) 
\prod_{j=1}^k\langle u_{\sigma_1(j)},b_{l_j}\rangle\langle u_{\sigma_2(j)},b_{l_j}\rangle\\
&=\sum_{|I|=k}\lambda_I\left(\det \left( \langle b_i,u_j\rangle\right)_{i\in I,j\leq k}\right)^2
=\sum_{|I|=k}\lambda_I \left\langle U, B_I\right\rangle^2.
\end{align*}
If $L$ is regular, $U=u^\perp$ and $I=\{1,\dots,i-1,i+1,\dots,d\}$, note that $\lambda_I=(\det\Lambda)\lambda_i^{-1}$ and $\langle U, B_I\rangle=\langle u,b_i\rangle^2$.
\end{proof}

In the sequel, we will use the following constant:
$$O_k:=\Ha^k(S^k)=\frac{2\pi^{k/2}}{\Gamma\left(\frac k2\right)},$$
$\Ha^k$ denotes the $k$-dimensional Hausdorff measure and $S^k$ is the unit sphere in $\R^{k+1}$.

Note that for the constant $\beta_{d,k}$ from \eqref{PKF}, another representation follows by using the Legendre duplication formula for the Gamma function:
\begin{equation} \label{beta}
\beta_{d,k}=\frac 1{\binom{d-1}{k}}\frac{\Gamma(\frac d2)}{\Gamma(\frac k2 +1)\Gamma(\frac{d-k}2)}.
\end{equation}

\section{A translative integral formula and flag measures}

We say that a subset $X\subset\R^d$ has \emph{locally positive reach} if $X$ is closed and $\reach(X,x)>0$ at any $x\in X$. Since the reach function is continuous in $X$, sets with locally positive reach behave locally as sets with positive reach (introduced by Federer, see \cite{Fed59}) and practically all local quantities and results known for sets with positive reach can be extended to this setting.

Let a subset $X\subset\R^d$ with locally positive reach be given. For any $x\in X$, the tangent (contingent) cone $\Tan (X,x)$ of $X$ at $x$ is a closed convex cone, and its dual cone $\Nor(X,x)$ is called \emph{normal cone} of $X$ at $x$. The \emph{unit normal bundle}
$$\nor X:=\{(x,u):\, x\in\partial X,\, u\in S^{d-1}\cap\Nor(X,x)\}$$
is a locally $(d-1)$-rectifiable subset of $\R^{2d}$ and at $\Ha^{d-1}$-almost all $(x,u)\in\nor X$, the $d-1$ principal curvatures $\kappa_1(x,u),\dots,\kappa_{d-1}(x,u)\in (-\infty,\infty]$ and corresponding principal directions $b_1(x,u),\dots,b_{d-1}(x,u)$ are defined so that $(b_1(x,u),\dots,b_{d-1}(x,u),u)$ is a positively oriented orthonormal basis od $\R^d$. (See \cite[Ch.~4]{RZ19} for details.) We will use the notation for subsets $I\subset\{0,\dots,d-1\}$
$$K_I(x,u):=\frac{\prod_{i\in I}\kappa_i(x,u)}{\prod_{i=1}^{d-1}\sqrt{1+\kappa_i^2(x,u)}},\quad
B_I(x,u):=\Lin\{b_i(x,u):\, i\in I\}.$$
(Here and in the sequel, we use the convention $\frac 1{\sqrt{1+\infty^2}}=0$, $\frac \infty{\sqrt{1+\infty^2}}=1$.) We shall use additionally the upper index $(X)$ if necessary.
The $k$th curvature-direction measure of $X$ ($k=0,1,\dots,d-1$) is a signed Radon measure in $\R^{2d}$ given by
$$\widetilde{C}_k(X,\cdot)=\frac 1{O_{d-1-k}}\int_{(\cdot)\cap\nor X}\sum_{|I|=d-1-k}K_I\, d\Ha^{d-1},$$
where $\sum_{|I|=j}$ denotes the summation over subsets of $\{1,\dots,d-1\}$ of cardinality $j$ (for $O_{d-1-k}$, see Preliminaries). The $k$th curvature measure $C_k(X,\cdot)$ and $k$th area measure $S_k(X,\cdot)$ are its marginals:
$$C_k(X,\cdot)=\widetilde{C}_k((\cdot)\times\R^d),\quad 
S_k(X,\cdot)=\widetilde{C}_k(\R^d\times(\cdot)).$$
For $k=d$ we set
$$C_d(X,\cdot)=\cL^d((\cdot)\cap X).$$

Let now two sets $X,Y\subset\R^d$ with locally positive reach be given. We say that $X$ and $Y$ \emph{touch} if there exists $(x,u)\in\nor X$ such that $(x,-u)\in\nor Y$. We will assume that

\begin{equation} \label{touch}
\cL^d\{z\in\R^d:\, X\text{ and }Y+z\text{ touch}\}=0.
\end{equation}

The \emph{mixed curvature measure} of $X,Y$ and orders $1\leq k,l\leq d-1$, $d\leq k+l$, is a signed measure on $\R^{2d}$ defined as
\begin{align*}
C_{k,l}(X,Y,A)=&\int_{\nor X\times\nor Y}\1_A(x,y)F_{k,l}(\angle(u,v))
\sum_{|I|=k^*}\sum_{|J|=l^*}K_I^{(X)}(x,u)K_J^{(Y)}(y,v)\\
&\times\left\|B_I^{(X)}(x,u)\wedge u\wedge B_J^{(Y)}(y,v)\wedge v\right\|^2\Ha^{2d-2}(d(x,u,y,v)),
\end{align*}
(recall that $k^*=d-1-k$), where 
$$F_{k,l}(\theta)=\frac 1{O_{2d-k-l-1}}\frac\theta{\sin\theta}
\int_0^1\left(\frac{\sin((1-t)\theta)}{\sin\theta}\right)^{d-1-k}
\left(\frac{\sin(t\theta)}{\sin\theta}\right)^{d-1-l}\, dt$$
if $\theta\in (0,\pi)$, $F_{k,l}(0):=\lim_{\theta\to 0+}F_{k,l}(\theta)$ and $F_{k,l}(\pi):=0$. (We will write $F_k(\cdot)$ in place of $F_{k,d-k}(\cdot)$ for brevity.)
In order that $C_{k,l}(X,Y,\cdot)$ is well-defined, we have to assume that its total variation measure
\begin{equation}  \label{bound} 
|C_{k,l}|(X,Y,\cdot)\text{ is locally finite}.
\end{equation}

The following version of the translative intersection formula is a special case of \cite[Theorem~6.10]{RZ19}.
\begin{theorem}  \label{T_transl}
Let $X,Y\subset \R^d$ be closed sets with locally positive reach satisfying \eqref{touch} and \eqref{bound}, and let $h$ be a nonnegative measurable function on $\R^d\times\R^d$ such that $\spt h\cap((X-Y)\times X)$ is bounded. Then
$$\int_{\R^d} h(z,x)V_0(X\cap(Y+z),dx)\, dz=\sum_{k=0}^d \int h(x-y,x)\,C_{k,d-k}(X,Y, d(x,y)).$$
\end{theorem}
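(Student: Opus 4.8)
The plan is to obtain this identity as the order-$0$ instance of the general translative intersection formula for curvature measures of sets with locally positive reach, \cite[Theorem~6.10]{RZ19}. That result asserts, for each $0\leq m\leq d$ and suitable test functions, that the translation average of the $m$th curvature measure $C_m(X\cap(Y+z),\cdot)$ equals a sum of mixed curvature measures $C_{k,l}(X,Y,\cdot)$ taken over the index pairs with $k+l=d+m$. Setting $m=0$ forces $l=d-k$, so the sum runs over $k=0,\dots,d$ and reproduces exactly the right-hand side of the assertion; here $V_0(\cdot,\cdot)=C_0(\cdot,\cdot)$ is the $0$th curvature measure, so the left-hand side is the corresponding translation average. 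First I would record that the substitution $z\mapsto x-y$ is the one built into the formula: on the intersection one has $x\in X$ and $x-z=y\in Y$, which is why $h$ is evaluated at $(z,x)=(x-y,x)$ against $C_{k,d-k}(X,Y,d(x,y))$.

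Next I would verify that the hypotheses of \cite[Theorem~6.10]{RZ19} are exactly the two assumptions imposed here. Condition \eqref{touch} guarantees, for $\cL^d$-almost every $z$, that $X$ and $Y+z$ do not touch, so that $X\cap(Y+z)$ again has locally positive reach and $V_0(X\cap(Y+z),\cdot)$ is a well-defined signed Radon measure; thus the left-hand integrand makes sense for a.e.\ $z$. Condition \eqref{bound}, the local finiteness of each total variation $|C_{k,d-k}|(X,Y,\cdot)$, makes the right-hand side well-defined and supplies the absolute convergence needed to apply the general theorem. The support hypothesis on $h$ plays the role of the usual compactness assumption: since $X\cap(Y+z)\neq\emptyset$ forces $z\in X-Y$, and the relevant base points lie in $X$, the condition that $\spt h\cap((X-Y)\times X)$ be bounded confines both sides to a bounded region and keeps every integral finite.

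The one point requiring genuine care is the identification of the two extreme summands $k=0$ and $k=d$, because the displayed integral defining $C_{k,l}(X,Y,\cdot)$ was only written for $1\leq k,l\leq d-1$. For these interior indices the summands are literally the mixed curvature measures given by the integral over $\nor X\times\nor Y$ with kernel $F_{k,d-k}(\angle(u,v))$ and the squared multivector norm. The boundary terms are the degenerate mixed measures in which one of the two sets contributes its full-dimensional volume: one checks that \cite[Theorem~6.10]{RZ19} yields $C_{0,d}(X,Y,\cdot)=C_0(X,\cdot)\otimes C_d(Y,\cdot)$ and $C_{d,0}(X,Y,\cdot)=C_d(X,\cdot)\otimes C_0(Y,\cdot)$, with $C_d(\,\cdot\,,A)=\cL^d(A\cap\,\cdot\,)$, these being the terms that, after taking expectations and using stationarity, become the summands $\overline{V}_0(Z)\cL^d(W)$ and $\overline{V}_d(Z)\chi(W)$ in Theorem~\ref{T1}. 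I expect the main obstacle to be precisely this matching of the boundary conventions: the integral defining $C_{k,l}$ degenerates at $k\in\{0,d\}$ (for instance the factor $(\sin(t\theta)/\sin\theta)^{d-1-l}$ fails to be integrable in $t$ when $l=d$), so these summands must be supplied by the separate conventions of \cite[Theorem~6.10]{RZ19} rather than by the displayed formula, and one must then reconcile the normalizing constants between the two notational frameworks.

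Finally, if the cited theorem is phrased for indicators of product Borel sets rather than for a general kernel $h$, I would upgrade it by the standard measure-theoretic argument: the identity holds for $h=\1_A\otimes\1_B$, hence for nonnegative simple functions by linearity, and hence for every nonnegative measurable $h$ by monotone convergence, the support condition guaranteeing finiteness at each stage.
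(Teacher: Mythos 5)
Your proposal is correct and follows exactly the route the paper takes: the paper offers no independent proof, stating only that the theorem is ``a special case of \cite[Theorem~6.10]{RZ19}'', and your reduction---specializing the general translative formula to order $m=0$ so that $l=d-k$, matching hypotheses \eqref{touch} and \eqref{bound}, identifying the boundary terms $C_{0,d}=C_0(X,\cdot)\otimes\cL^d|_Y$ and $C_{d,0}=\cL^d|_X\otimes C_0(Y,\cdot)$ (consistent with how these appear in the paper's proof of Theorem~\ref{T1}), and upgrading from indicators to general $h$ by monotone convergence---is precisely the verification that citation leaves implicit. The only difference is that you supply more detail than the paper does, which is sound rather than divergent.
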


Recall that given $0\leq k\leq d-1$, the $k$-\emph{flag space} $F^\perp(d,k)$ is the set of all pairs $(u,U)\in S^{d-1}\times G(d,k)$ such that $u\perp U$. The section of $F^\perp(d,k)$ at $u\in S^{d-1}$ will be denoted by $G^{u^\perp}(d-1,k)$ (it is, in fact, the Grassmannian in $u^\perp\simeq \R^{d-1}$). The integration over a Grassmannian $G(d,k)$ with respect to the invariant probability measure will be denoted simply by $dU$ ($U\in G(d,k)$).

Let $X\subset\R^d$ have locally positive reach and let $k\in\{0,\dots,d-1\}$ be given. The $k$th \emph{flag measure} of $X$ is the Radon signed measure on $\R^d\times F^\perp(d,k^*)$ given by
\begin{align}  \label{def_Gamma}
\int &g(x,u,U)\,\Gamma_k(X,d(x,u,U))\\
&=\gamma_{d,k}\int_{\nor X}\sum_{|I|=k^*}K_I(x,u)\int_{G^{u^\perp}(d-1,k^*)}g(x,u,U)\langle U,B_I(x,u)\rangle^2\, dU\, \Ha^{d-1}(d(x,u)),  \nonumber
\end{align}
where $\gamma_{d,k}=\binom{d-1}{k}/O_{k^*}$. Its marginal distribution on $F^\perp(d,k^*)$ is denoted by
$$\Omega_k(X,\cdot)=\Gamma_k(X,\R^d\times(\cdot)).$$

By \cite[Proposition~2]{HRW13}, there exists a smooth function $\varphi_k$ on $F^\perp(d,k^*)\times F^\perp(d,(d-k)^*)$ such that
\begin{equation}  \label{E_varphi}
\int_{F^\perp(d,k^*)}\int_{F^\perp(d,(d-k)^*)}\langle A,U\rangle^2\varphi_k(u,U,v,V)\langle B,V\rangle^2\, dV\, dU=\frac{1}{\gamma_{d,k}\gamma_{d,d-k}}\left\|A\wedge u\wedge B\wedge v\right\|^2
\end{equation}
for any $(u,A)\in F^\perp(d,k^*)$ and $(v,B)\in F^\perp(d,(d-k)^*)$. 
We will not state here the explicit form of $\varphi_k$, see \cite[Eq.~(17)]{HRW13}; an explicit form of the coefficients was obtained in \cite[Section~5]{GHHRW17}.

As a consequence we obtain
\begin{proposition} \label{P_mixed}
Two sets $X,Y\subset R^d$ with locally positive reach satisfy for $0\leq k\leq d-1$ and for any bounded Borel function $g$ with compact support
\begin{align*}
\int g(x,y)&\, C_{k,d-k}(X,d(x,y))=\\
&\int g(x,y) F_{k,d-k}(\angle(u,v))\varphi_k(u,U,v,V)\, (\Gamma_k(X,\cdot)\otimes\Gamma_{d-k}(Y,\cdot))(d(x,u,U,y,v,V)),
\end{align*}
provided that 
\begin{equation}  \label{integr}
|g(x,y)|F_k(\angle(u,v))|\varphi_k(u,U,v,V)|\text{ is locally } |\Gamma_k|(X,\cdot)\otimes|\Gamma_{d-k}|(Y,\cdot)\text{-integrable}.
\end{equation}
Also, \eqref{integr} implies that \eqref{bound} is satisfied with $k,d-k$.
\end{proposition}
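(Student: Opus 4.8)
The plan is to insert the pointwise identity \eqref{E_varphi} into the definition of the mixed curvature measure $C_{k,d-k}(X,Y,\cdot)$ and then to reorganize the integrations by Fubini's theorem, so that the contributions of $X$ and of $Y$ reassemble into the flag measures $\Gamma_k(X,\cdot)$ and $\Gamma_{d-k}(Y,\cdot)$. For a fixed pair $(x,u,y,v)\in\nor X\times\nor Y$ and index sets $I,J$ with $|I|=k^*$, $|J|=(d-k)^*$, the principal directions satisfy $B_I^{(X)}(x,u)\perp u$ and $B_J^{(Y)}(y,v)\perp v$, so that $(u,B_I^{(X)})\in F^\perp(d,k^*)$, $(v,B_J^{(Y)})\in F^\perp(d,(d-k)^*)$, and \eqref{E_varphi} applies with $A=B_I^{(X)}(x,u)$ and $B=B_J^{(Y)}(y,v)$. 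Summing against $K_I^{(X)}K_J^{(Y)}$ and using that $\varphi_k$ is independent of $I,J$ yields, for each $(x,u,y,v)$,
\begin{align*}
\sum_{|I|=k^*}\sum_{|J|=(d-k)^*}&K_I^{(X)}K_J^{(Y)}\left\|B_I^{(X)}\wedge u\wedge B_J^{(Y)}\wedge v\right\|^2\\
&=\gamma_{d,k}\gamma_{d,d-k}\int\!\!\int\Big(\sum_{|I|=k^*}K_I^{(X)}\langle B_I^{(X)},U\rangle^2\Big)\varphi_k(u,U,v,V)\Big(\sum_{|J|=(d-k)^*}K_J^{(Y)}\langle B_J^{(Y)},V\rangle^2\Big)\,dV\,dU,
\end{align*}
the double integral running over $G^{u^\perp}(d-1,k^*)\times G^{v^\perp}(d-1,(d-k)^*)$.

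The decisive point is that this substitution must be made \emph{before} estimating, because the signed inner sum $\sum_{|I|=k^*}K_I^{(X)}\langle B_I^{(X)},U\rangle^2$ is, up to the factor $\gamma_{d,k}$, exactly the density of $\Gamma_k(X,\cdot)$ relative to $dU\,\Ha^{d-1}$; hence $\gamma_{d,k}\big|\sum_{|I|=k^*}K_I^{(X)}\langle B_I^{(X)},U\rangle^2\big|$ is the density of the total variation $|\Gamma_k|(X,\cdot)$. I would first use this to prove \eqref{integr} $\Rightarrow$ \eqref{bound}: bounding the total variation of the $(x,y)$-marginal $C_{k,d-k}(X,Y,\cdot)$ by the marginal of the absolute value of its density, inserting the displayed identity, passing to absolute values inside the $dV\,dU$ integral, and estimating $\varphi_k$ by $|\varphi_k|$ gives, for any bounded Borel set $A$,
\begin{align*}
|C_{k,d-k}|(X,Y,A)\le\int\!\!\int \1_A(x,y)\,F_{k,d-k}(\angle(u,v))\,|\varphi_k(u,U,v,V)|\,\big(|\Gamma_k|(X,\cdot)\otimes|\Gamma_{d-k}|(Y,\cdot)\big)(d(x,u,U,y,v,V)),
\end{align*}
where Tonelli's theorem splits the nonnegative multiple integral and \eqref{def_Gamma} identifies the two total-variation measures. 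By the local integrability asserted in \eqref{integr} the right-hand side is finite, which is \eqref{bound}.

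Granted this absolute integrability, the main identity follows by repeating the manipulation without absolute values. Substituting the displayed identity into the definition of $C_{k,d-k}(X,Y,\cdot)$, multiplying by $g$, and interchanging the $\Ha^{2d-2}$-integration over $\nor X\times\nor Y$ with the Grassmannian integrations over $U,V$ (legitimate by Fubini's theorem, thanks to \eqref{integr}), the factor $\gamma_{d,k}\sum_{|I|=k^*}K_I^{(X)}\langle B_I^{(X)},U\rangle^2\,dU\,\Ha^{d-1}$ reassembles by \eqref{def_Gamma} into $\Gamma_k(X,\cdot)$ and, likewise, the $J$--$V$--$Y$ factor into $\Gamma_{d-k}(Y,\cdot)$; the constant $\gamma_{d,k}\gamma_{d,d-k}$ produced by \eqref{E_varphi} is precisely absorbed by the two flag measures. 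Recalling $F_k=F_{k,d-k}$, this gives the asserted integral against $\Gamma_k(X,\cdot)\otimes\Gamma_{d-k}(Y,\cdot)$.

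The main obstacle is the interchange of integrations for signed kernels: both the curvature products $K_I^{(X)}K_J^{(Y)}$ and the coefficient $\varphi_k$ change sign, so Fubini's theorem demands genuine absolute integrability, not merely local finiteness of the separate factors, and the total-variation estimate above is exactly the device that converts \eqref{integr} into that hypothesis. I expect the delicate verification to be that the absolute value of the signed sum over $I$ (respectively $J$) reproduces the total-variation density of $|\Gamma_k|$ (respectively $|\Gamma_{d-k}|$) rather than the strictly larger quantity $\sum_{|I|=k^*}|K_I^{(X)}|\langle\,\cdot\,,B_I^{(X)}\rangle^2$; it is this identification that makes the comparatively weak hypothesis \eqref{integr}, phrased against the total variations, sufficient.
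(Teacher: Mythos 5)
Your proposal is correct and follows essentially the same route as the paper, which states Proposition~\ref{P_mixed} as a direct consequence of \eqref{E_varphi}: substitute the flag representation with $A=B_I^{(X)}(x,u)$, $B=B_J^{(Y)}(y,v)$ into the defining integral of $C_{k,d-k}(X,Y,\cdot)$ and reassemble via \eqref{def_Gamma}, with Fubini justified by \eqref{integr}. Your explicit verification that the total variation $|\Gamma_k|(X,\cdot)$ has density $\gamma_{d,k}\bigl|\sum_{|I|=k^*}K_I\langle U,B_I\rangle^2\bigr|$ (valid because the parametrization $((x,u),U)\mapsto(x,u,U)$ is injective), which makes \eqref{integr} strong enough for both the Fubini step and the implication \eqref{integr}$\Rightarrow$\eqref{bound}, is exactly the detail the paper leaves implicit.
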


The following lemma summarizes some sufficient conditions for the assumptions.

\begin{lemma}
Let $X,Y\subset\R^d$ have locally positive reach.
\begin{enumerate}
\item $X$ and $\rho Y$ satisfy \eqref{touch}, \eqref{bound} and \eqref{integr} for almost all rotations and/or reflections $\rho\in O(d)$.
\item $X$ and $Y$ satisfy \eqref{bound} and \eqref{integr} whenever for at least one of the sets $X$,$Y$, its $k$th area measure has locally bounded density w.r.t. $\Ha^{d-1}$. 
\end{enumerate}
\end{lemma}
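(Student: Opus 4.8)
The plan is to reduce both parts to controlling, for $X,Y$ restricted to a fixed bounded region (as forced by the compact support in \eqref{integr}), the spherical double integral
$$J(X,Y):=\int_{S^{d-1}}\int_{S^{d-1}}\sin^{3-d}\angle(u,v)\,d\sigma_X^k(u)\,d\sigma_Y^{d-k}(v),$$
where $\sigma_X^k$ denotes the marginal on the $u$-variable of the flag total-variation measure $|\Gamma_k|(X,\cdot)$, and likewise $\sigma_Y^{d-k}$ for $|\Gamma_{d-k}|(Y,\cdot)$. The reduction rests on two inputs. First, the pointwise estimate $F_k(\angle(u,v))\,|\varphi_k(u,U,v,V)|\le c_{d,k}\sin^{3-d}\angle(u,v)$ on $F^\perp(d,k^*)\times F^\perp(d,(d-k)^*)$, which is the estimate from \cite{HRW13} underlying the sufficiency of \eqref{suffic} in Theorem~\ref{T1}; since the right-hand side depends only on $u,v$, integrating out the Grassmannian and spatial variables bounds the flag integral in \eqref{integr} by $c_{d,k}\|g\|_\infty J(X,Y)$. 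Second, because every $|K_I|\le1$ by its definition and $\nor X$ is locally $(d-1)$-rectifiable, $\sigma_X^k$ and $\sigma_Y^{d-k}$ are locally finite, each being dominated by a constant multiple of the image under $(x,u)\mapsto u$ of the restriction of $\Ha^{d-1}$ to $\nor X$ (resp.\ $\nor Y$). Thus $J(X,Y)<\infty$ gives \eqref{integr}, and \eqref{integr} gives \eqref{bound} by Proposition~\ref{P_mixed}. The decisive elementary fact throughout is
$$\int_{S^{d-1}}\sin^{3-d}\angle(u,w)\,\Ha^{d-1}(dw)=O_{d-2}\int_0^\pi\sin^{3-d}\theta\,\sin^{d-2}\theta\,d\theta=O_{d-2}\int_0^\pi\sin\theta\,d\theta=2O_{d-2},$$
the two exponents cancelling to leave an integrable one.

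For Part~(1), consider first \eqref{touch}. The sets $X$ and $\rho Y+z$ touch exactly when there exist $(x,u)\in\nor X$ and $(y,v)\in\nor Y$ with $\rho v=-u$ and $z=x-\rho y$. Hence the bad set of parameters is $T=\Psi(D)$, the image under the locally Lipschitz map $\Psi((x,u),(y,v),\rho)=(\rho,x-\rho y)$ of $D=\{((x,u),(y,v),\rho)\in\nor X\times\nor Y\times O(d):\rho v=-u\}$. For fixed unit $u,v$ the fibre $\{\rho:\rho v=-u\}$ is a coset of the stabiliser of $v$, a smooth submanifold of dimension $\binom{d-1}{2}$, so $D$ is rectifiable of dimension $2(d-1)+\binom{d-1}{2}=\tfrac12(d-1)(d+2)$, exactly one below $\dim(O(d)\times\R^d)=\tfrac12 d(d+1)$. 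A locally Lipschitz image of a rectifiable set of strictly smaller dimension is null, so $T$ is $(\nu\otimes\cL^d)$-null for Haar measure $\nu$ on $O(d)$; Fubini then gives $\cL^d(T_\rho)=0$ for $\nu$-a.a.\ $\rho$, i.e.\ \eqref{touch} for $X$ and $\rho Y$.

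For the integrability in Part~(1), rotating $Y$ pushes its marginals forward, so $J(X,\rho Y)=\int\int\sin^{3-d}\angle(u,\rho v)\,d\sigma_X^k(u)\,d\sigma_Y^{d-k}(v)$. Averaging over $\rho\in O(d)$ against normalized Haar measure and using Fubini (the integrand is nonnegative), the inner rotation integral equals, for each fixed $u,v$,
$$\int_{O(d)}\sin^{3-d}\angle(u,\rho v)\,d\rho=\frac1{O_{d-1}}\int_{S^{d-1}}\sin^{3-d}\angle(u,w)\,\Ha^{d-1}(dw)=\frac{2O_{d-2}}{O_{d-1}},$$
because $\rho v$ is uniformly distributed on $S^{d-1}$. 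Hence $\int_{O(d)}J(X,\rho Y)\,d\rho=\frac{2O_{d-2}}{O_{d-1}}\,\sigma_X^k(S^{d-1})\,\sigma_Y^{d-k}(S^{d-1})<\infty$ on the bounded region, so $J(X,\rho Y)<\infty$ for a.a.\ $\rho$; by the first paragraph this yields \eqref{integr} and \eqref{bound} for $X,\rho Y$ for a.a.\ $\rho$.

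For Part~(2), suppose (say) that the $k$th area measure of $X$ has locally bounded density. Since $|K_I|\le1$ for all $I$, the density of $\sigma_X^k$ with respect to $\Ha^{d-1}$ on $S^{d-1}$ is controlled by a constant multiple of the surface-area density of $X$; granting that this is locally bounded by some $M$, one obtains $\int\sin^{3-d}\angle(u,v)\,d\sigma_X^k(u)\le 2M O_{d-2}$ uniformly in $v$, and integration against the locally finite $\sigma_Y^{d-k}$ gives $J(X,Y)<\infty$, hence \eqref{integr} and \eqref{bound}. I expect the main obstacle to be precisely this comparison step: the flag marginal $\sigma_X^k$ in general dominates the area measure $|S_k|$, since its density involves $\int_{G^{u^\perp}}|\sum_{|I|=k^*}K_I\langle U,B_I\rangle^2|\,dU$ rather than $|\sum_{|I|=k^*}K_I|$, so one must exploit $|K_I|\le1$ together with the regularity implicit in the hypothesis to pass from the assumed bound on $|S_k|$ to a bound on $\sigma_X^k$. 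The touching dimension count and the spherical computation, by contrast, are routine, so this transfer from area measures to flag marginals is where the real work sits.
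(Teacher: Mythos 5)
Your part (1) is essentially correct and, modulo citations, is the paper's own argument. The paper disposes of \eqref{touch} by citing \cite[Proposition~6.13~(iv)]{RZ19}; your dimension count (fibres $\{\rho:\,\rho v=-u\}$ of dimension $\binom{d-1}{2}$, total dimension exactly one below $\dim(O(d)\times\R^d)$, Lipschitz images preserve nullity, then Fubini) is the standard proof behind that citation, though to make the set $D$ genuinely rectifiable you still need a locally Lipschitz selection $(u,v)\mapsto\rho_{u,v}$ with $\rho_{u,v}v=-u$ on charts avoiding the degenerate locus. For \eqref{integr} under (1), the paper's one-sentence argument (finiteness of $\int_{S^{d-1}}\sin^{3-d}\angle(u,v)\,\Ha^{d-1}(dv)$ plus local finiteness of the flag measures) is exactly your rotation-averaging Fubini argument in compressed form, with the same key spherical computation. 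Deriving \eqref{bound} from \eqref{integr} via the last assertion of Proposition~\ref{P_mixed} is legitimate (the paper instead cites \cite[Lemma~6.18]{RZ19}); it yields \eqref{bound} only for the complementary pairs $(k,d-k)$, but that is all Theorem~\ref{T_transl} uses.

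Part (2), however, you have not proved, and you say so yourself: you ``grant'' that the $u$-marginal of $|\Gamma_k|(X,\cdot)$ over a bounded spatial window has bounded density on $S^{d-1}$. That is not a technical footnote --- it is the entire content of part (2), since once that marginal has bounded density the conclusion follows by the same two lines as in part (1). And the transfer is genuinely problematic if the hypothesis is read as a bound on the total variation of the signed area measure $S_k(X,\cdot)$: its density is (up to constants) the push-forward of $\bigl|\sum_{|I|=k^*}K_I\bigr|$ from $\nor X$, whereas the marginal of $|\Gamma_k|(X,\cdot)$ carries the weight $\int_{G^{u^\perp}(d-1,k^*)}\bigl|\sum_{|I|=k^*}K_I\langle U,B_I\rangle^2\bigr|\,dU$, and cancellation can make the former vanish while the latter does not (take $k^*=1$ and a boundary piece with vanishing mean curvature but nonzero principal curvatures: $S_{d-2}$ vanishes there, the flag total variation does not); moreover a density bound for $|S_k|$ on the sphere does not control the multiplicity of the Gauss map, which enters the flag marginal identically. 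So no inequality of the form $(\text{flag marginal})\leq c\,|S_k|$ can hold, and your reduction to ``the surface-area density of $X$'' via $|K_I|\leq 1$ bounds the flag marginal by the push-forward of $\Ha^{d-1}$ restricted to $\nor X$ --- a quantity that the stated hypothesis on the $k$th area measure does not control either. To be fair, the paper's own proof of (2) is silent on this point as well: it applies \eqref{E_sph}, asserts \eqref{integr}, and for \eqref{bound} refers to the estimates of \cite[Lemma~6.15]{RZ19}. You have therefore correctly located the crux, but as written your proposal establishes only part (1); part (2) requires either this missing comparison (e.g.\ by interpreting the hypothesis as a bound on the push-forward of $\sum_{|I|}|K_I|\,\Ha^{d-1}$ restricted to $\nor X$) or the cited estimates.
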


\begin{proof}
For the validity of \eqref{touch} and \eqref{bound} under assumptions given in (i), see \cite[Proposition~6.13~(iv), Lemma~6.18]{RZ19}. 

In the proof of \cite[Theorem~2]{HRW13} it is shown that 
\begin{equation} \label{E_sph}
F_{k,d-k}(\angle(u,v))|\varphi_k(u,U,v,V)|\leq\operatorname{const}\cdot \sin^{3-d}(\angle(u,v)).
\end{equation}
Since $\int_{S^{d-1}}\sin^{3-d}(\angle(u,v))\, \Ha^{d-1}(dv)$ is a finite constant and $|\Gamma_k|(X,\cdot)$, $|\Gamma_{d-k}|(Y,\cdot)$ are locally finite, we obtain \eqref{integr} under (i) and (ii). Using the estimates from \cite[Lemma~6.15]{RZ19}, we obtain by similar reasoning that \eqref{bound} holds also under (ii).
\end{proof}

\section{Mean Euler characteristics for stationary random sets}
Let $Z\subset\R^d$ be a stationary random closed set (see \cite[Ch.~2]{SW08}). Since the family of sets with locally positive reach is a Borel subset of the family of closed sets (cf.\ \cite{Z84}), we can consider a a stationary random closed set $Z$ whose realizations have locally positive reach.

The $k$th curvature measure of $Z$, $C_k(Z,\cdot)$, is then a random measure in $\R^d$. Recall that we assume that  
$\E C_k(Z,\cdot)$ is locally finite (see \eqref{fin_dens}) and the curvature densities are defined by
$$\E C_k(Z,\cdot)=\overline{V}_k(Z)\cL^d(\cdot),\quad k=1,\dots,d.$$
If we consider the $k$th flag measure $\Gamma_k(Z,\cdot)$ of $Z$, notice that its marginal is $C_k(Z,\cdot)$. By disintegration of mean of the curvature-direction measure, we obtain that there exists a probability measure $\overline{S}_k(Z,\cdot)$ on $S^{d-1}$ such that
\begin{equation} \label{E_disint}
\E\int h(x,u)\,\widetilde{C}_k(Z,d(x,u))=\int_{\R^d}\int_{S^{d-1}}h(x,u)\, \overline{S}_k(Z,du)\, dx
\end{equation}
for any nonnegative measurable function $h$ on $\R^{2d}$.
Similarly, concerning the specific flag measure of order $k$,
we obtain that there exists a probability measure $\overline{\Omega}_k(Z,\cdot)$ on $F^\perp(d,k^*)$ such that
\begin{equation} 
\E\int g(x,u,U)\,\Gamma_k(Z,d(x,u,U))=\int_{\R^d}\int_{F^\perp(d,k^*)}g(x,u,U)\, \overline{\Omega}_k(Z,d(u,U))\, dx
\end{equation}
for any nonnegative measurable function $g$ on $\R^d\times F^\perp(d,k^*)$. Clearly, 
$$\overline{S}_k(Z,\cdot)=\overline{\Omega}_k(Z,(\cdot)\times G(d,k^*)).$$

We are now in position to prove our first main result.

\begin{proof}[Proof of Theorem~\ref{T1}]
Using the stationarity of $Z$ and Fubini theorem we obtain from \eqref{touch2} 
$$0=\int_{\R^d}\Pr[Z+x\text{ and }W\text{ touch}]\, dx
=\E\cL^d\{x\in\R^d:\, Z+x\text{ and }W\text{ touch}\},$$
hence \eqref{touch} holds for $Z,W$ almost surely. Also, Fubini yields that \eqref{integr_E} implies that $Z,W$ satisfy \eqref{integr} almost surely.

Let $B\subset\R^d$ be a fixed Borel set of unit volume. Theorem~\ref{T_transl} with $h(z,x)=\1_B(z)$ yields 
$$\int_B\chi((Z+z)\cap W)\, dz=\sum_{k=0}^d\int \1_B(x-y)\, C_{k,d-k}(Z,W,d(x,y))$$
almost surely. Applying 
Proposition~\ref{P_mixed}, we get almost surely
\begin{align*}
\int_B&\chi((Z+z)\cap W)\, dz\\
&=\int_W C_0(Z,B+y)\, dy+\int C_d(Z,B+y)\, C_0(W,dy)\\
&+\sum_{k=1}^{d-1}\iint \1_B(x-y)F_k(\angle(u,v))\varphi_k(u,U,v,V)\, \Gamma_k(Z,d(x,u,U)\, \Gamma_{d-k}(W,d(y,v,V).
\end{align*}
Now we use Fubini theorem (its use is justified by \eqref{integr_E}), stationarity of $Z$ and \eqref{E_disint2} and get
the desired formula for $\E\chi(Z\cap W)$.

Using formula \eqref{E_sph}, we easily find that \eqref{suffic} guarantees \eqref{integr_E}. Finally, if $\E|S_k|(Z,\cdot)$ has density $f$ on $S^{d-1}$ bounded by a constant $K>0$ then
$$\E\int \sin^{3-d}\angle(u,v)\, |S_k|(Z,du)\leq K\,
\E\int_{S^{d-1}} \sin^{3-d}\angle(u,v)\, \Ha^{d-1}du$$
and since the last integral is a finite constant independent of $v$, \eqref{integr_E} follows.
\end{proof}

\section{Excursion sets of stationary random fields}

Let $\xi:\R^d\to\R$ be a stationary random field and assume that $\xi$ is $C^{1,1}$ smooth almost surely. If $\alpha\in\R$ is a regular value of $\xi$ a.s. (i.e., if the gradient $\nabla\xi(x)\neq 0$ whenever $\xi(x)=\alpha$), the \emph{excursion set} of $\xi$ at $\alpha$,
$$Z_\alpha:=\{x\in\R^d:\, \xi(x)\geq\alpha\},$$
is a stationary random closed set with $C^{1,1}$ boundary (hence, with locally positive reach) almost surely. By the Rademacher theorem, the second order partial derivatives exist almost everywhere; we use the notation $\nabla^2\xi(x)$ for the Hessian matrix of second order derivatives of $\xi$ at $x$.

We assume in the sequel that 
\begin{equation} \label{Ass_dens}
\xi(0)\text{ is absolutely continuous with density }\varphi,
\end{equation}  
and that
\begin{equation} \label{Ass_sing}
\text{Almost all }\alpha\text{ are regular values of }\xi\text{ almost surely.}
\end{equation}

In order to apply the formula from Theorem~\ref{T1} for $Z_\alpha$, we need to obtain the specific flag measure $\overline{\Omega}_k(Z_\alpha,\cdot)$. 
In the smooth case, flag measures can be defined by integration over the boundary instead of the normal bundle; using the area formula for the mapping $(x,u)\mapsto x$ from $\nor Z_\alpha$ to $\partial Z_\alpha$ (see \cite[\S3.2.22]{Fed69}), we obtain from \eqref{def_Gamma} 
\begin{align}   \label{Egamma}
\int &\1_A(x)g(u,U)\,\Gamma_k(Z_\alpha,d(x,u,U))\\
&=\gamma_{d,k}\int_{A\cap \partial Z_\alpha}\sum_{|I|=k^*}k_I(x)\int_{G^{u(x)^\perp}(d-1,k^*)}g(u(x),U)\langle U,B_I(x)\rangle^2\, dU\, \Ha^{d-1}(dx),  \nonumber
\end{align}
where 
$$u(x):=\frac{\nabla\xi(x)}{|\nabla\xi(x)|}$$ 
is the (unique) unit outer normal vector to $Z_\alpha$ at $x\in\partial Z_\alpha$, $k_I(x)=\prod_{i\in I}\kappa_i(x)$ is the product of principal curvatures and $B_I(x)$ the linear span of the associated principal directions (as in \eqref{def_Gamma}, but now functions of solely $x$). 

\begin{lemma} \label{L_nabla}
Let $\xi$ be twice differentiable at $x\in\partial Z_\alpha$. Then for any $U\in G^{u(x)^\perp}(d-1,k^*)$,
$$\sum_{|I|=k^*}k_I(x)\langle U,B_I(x)\rangle^2=\frac{\left(-\nabla^2\xi(x)\right)[U]}{|\nabla\xi(x)|^{k^*}}.$$
\end{lemma}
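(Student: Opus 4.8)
The plan is to recognize the left-hand side as the value of a single quadratic form—the compression of $-\nabla^2\xi(x)$ to the tangent hyperplane $u(x)^\perp$—evaluated on the subspace $U$, and then to read off the identity from Lemma~\ref{L_bilin}.

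First I would identify the spectral data of $\partial Z_\alpha$ at $x$. Locally $\partial Z_\alpha$ is the level set $\{\xi=\alpha\}$, whose tangent space at $x$ is $u(x)^\perp$. For an arc-length curve $\gamma$ in $\partial Z_\alpha$ with $\gamma(0)=x$ and $\gamma'(0)=X\in u(x)^\perp$, differentiating the identity $\langle\nabla\xi(\gamma(t)),\gamma'(t)\rangle\equiv 0$ twice at $t=0$ gives $\langle\nabla^2\xi(x)X,X\rangle+\langle\nabla\xi(x),\gamma''(0)\rangle=0$, whence the normal curvature satisfies $\langle\gamma''(0),u(x)\rangle=\frac{1}{|\nabla\xi(x)|}\langle(-\nabla^2\xi(x))X,X\rangle$. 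Thus the principal curvatures $\kappa_i(x)$ and directions $b_i(x)$ are exactly the eigenvalues and an orthonormal eigenbasis of the selfadjoint operator $\frac{1}{|\nabla\xi(x)|}(-\nabla^2\xi(x))_{u(x)^\perp}$ on the $(d-1)$-dimensional Euclidean space $u(x)^\perp$; equivalently, the operator $L:=(-\nabla^2\xi(x))_{u(x)^\perp}$ (the compression of $-\nabla^2\xi(x)$ to $u(x)^\perp$) has eigenvectors $b_i(x)$ and eigenvalues $|\nabla\xi(x)|\,\kappa_i(x)$, $i=1,\dots,d-1$.

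With this identification the statement becomes pure linear algebra. Because $U\subset u(x)^\perp$ by hypothesis, the Gram determinant $(-\nabla^2\xi(x))[U]=\det(\langle(-\nabla^2\xi(x))u_i,u_j\rangle)_{i,j}$ for an orthonormal basis $u_1,\dots,u_{k^*}$ of $U$ involves only scalar products against vectors $u_j\in u(x)^\perp$, which are unaffected by projecting the image onto $u(x)^\perp$; hence $(-\nabla^2\xi(x))[U]=L[U]$, the same quantity computed for $L$ inside $u(x)^\perp$. I then apply Lemma~\ref{L_bilin} to $L$ on $u(x)^\perp\cong\R^{d-1}$, with eigenvalues $|\nabla\xi(x)|\,\kappa_i(x)$, eigenvectors $b_i(x)$, and index sets $I\subset\{1,\dots,d-1\}$ of cardinality $k^*$:
$$L[U]=\sum_{|I|=k^*}\Bigl(\prod_{i\in I}|\nabla\xi(x)|\,\kappa_i(x)\Bigr)\langle U,B_I(x)\rangle^2=|\nabla\xi(x)|^{k^*}\sum_{|I|=k^*}k_I(x)\,\langle U,B_I(x)\rangle^2,$$
using $|I|=k^*$ and $\prod_{i\in I}\kappa_i(x)=k_I(x)$; here the scalar products $\langle U,B_I(x)\rangle^2$ coincide whether formed in $u(x)^\perp$ or in $\R^d$, since they involve only vectors of $u(x)^\perp$. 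Dividing by $|\nabla\xi(x)|^{k^*}$ and recalling $(-\nabla^2\xi(x))[U]=L[U]$ yields the claim.

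The one step requiring genuine care is the first: fixing the sign and orientation so that $\frac{1}{|\nabla\xi(x)|}(-\nabla^2\xi(x))_{u(x)^\perp}$ reproduces exactly the principal curvatures $\kappa_i(x)$ and directions $b_i(x)$ in the precise sense of Section~2 (including the positively oriented frame $(b_1,\dots,b_{d-1},u)$ and the $(-\infty,\infty]$ range of the $\kappa_i$). Once this normal-bundle bookkeeping is reconciled with the level-set computation above, everything else is an immediate invocation of Lemma~\ref{L_bilin}, so I expect no further obstacle.
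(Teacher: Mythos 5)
Your proposal is correct and takes essentially the same approach as the paper: both proofs reduce the identity to Lemma~\ref{L_bilin} applied to the compression of $-\nabla^2\xi(x)/|\nabla\xi(x)|$ to $u(x)^\perp$, whose eigenvalues and eigenvectors are exactly the principal curvatures $\kappa_i(x)$ and directions $b_i(x)$, together with the observation that the bracket $[U]$ is unchanged by this compression since $U\subset u(x)^\perp$. The only difference is cosmetic: the paper obtains the spectral identification by differentiating the unit normal field (taking $L=-du(x)$, the Weingarten map, and computing $du(x)$ as the projection of $\nabla^2\xi(x)/|\nabla\xi(x)|$), whereas you derive it from the second fundamental form via curves in the level set --- equivalent bookkeeping for the same standard fact.
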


\begin{proof}
Let $L:=-du(x):u(x)^\perp\to u(x)^\perp$ be the Weingarten mapping at $x\in\partial Z_\alpha$. Then, $Lb_i=\kappa_ib_i$ for the principal curvatures $\kappa_i$ and principal directions $b_i$, $i=1,\dots,d-1$. Thus, using Lemma~\ref{L_bilin},
$$\sum_{|I|=k^*}k_I(x)\langle U,B_I(x)\rangle^2=L[U].$$
Note that $du(x)$ agrees with the orthogonal projection onto $u(x)^\perp$ of 
$\frac{d\nabla\xi(x)}{|\nabla\xi(x)|}=\frac{\nabla^2\xi(x)}{|\nabla\xi(x)|}$. Hence, since $U\subset u^\perp$, we have $L[U]=(-\nabla^2(x))[U]\;|\nabla(x)|^{-k^*}$, and the assertion follows.
\end{proof}

\begin{proposition}   \label{main2}
Let $\xi$ be a stationary random field fulfilling \eqref{Ass_dens} and \eqref{Ass_sing}, and let $\alpha$ be its regular value a.s. Then for any nonnegative measurable function $g$ on $F^\perp(d,k^*)$ we have
\begin{align*}
\int g(u,U)\, &\overline{\Omega}_k(Z_\alpha,d(u,U))\\
&=\gamma_{d,k}\varphi(\alpha)\E_\alpha\left( |\nabla\xi(0)|^{2+k-d}\int_{G^{u(0)^\perp}(d-1,k^*)}g(u(0),U)(-\nabla^2\xi(0))[U]\, dU\right),
\end{align*}
provided that the expectation on the right hand side converges; here $\E_\alpha$ denotes the conditional expectation under condition $\xi(0)=\alpha$.
\end{proposition}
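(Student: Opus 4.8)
The plan is to combine the smooth boundary representation \eqref{Egamma} of the flag measure with a Rice-type formula for the expected surface integral over the level set $\partial Z_\alpha=\{\xi=\alpha\}$. First I would insert Lemma~\ref{L_nabla} into \eqref{Egamma}: writing
$$
\Phi(x):=|\nabla\xi(x)|^{-k^*}\int_{G^{u(x)^\perp}(d-1,k^*)}g(u(x),U)\,(-\nabla^2\xi(x))[U]\,dU,
$$
the flag measure tested against $\1_A(x)g(u,U)$ becomes $\gamma_{d,k}\int_{A\cap\partial Z_\alpha}\Phi(x)\,\Ha^{d-1}(dx)$ for any bounded Borel set $A$. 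Taking expectations and using the disintegration \eqref{E_disint2} that defines $\overline{\Omega}_k$, the left-hand side equals $\cL^d(A)\int g(u,U)\,\overline{\Omega}_k(Z_\alpha,d(u,U))$, so everything reduces to evaluating $\E\int_{A\cap\partial Z_\alpha}\Phi\,d\Ha^{d-1}$.

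For this central step I would use the coarea formula together with stationarity and conditioning. For each realization and any nonnegative $\psi$ on $\R$, the coarea formula gives
$$
\int_{\R^d}\1_A(x)\Phi(x)\,\psi(\xi(x))\,|\nabla\xi(x)|\,dx=\int_\R\psi(t)\int_{\{\xi=t\}\cap A}\Phi\,d\Ha^{d-1}\,dt.
$$
Taking expectations, the integrand on the left depends on $x$ only through $(\xi(x),\nabla\xi(x),\nabla^2\xi(x))$, whose law is independent of $x$ by stationarity; applying Fubini and then conditioning on $\xi(0)=t$ turns the left-hand side into $\cL^d(A)\int_\R\psi(t)\,\varphi(t)\,\E_t[\Phi(0)|\nabla\xi(0)|]\,dt$. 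Matching the two $t$-measures for arbitrary $\psi$ identifies, for a.e.\ $t$,
$$
\E\int_{\{\xi=t\}\cap A}\Phi\,d\Ha^{d-1}=\cL^d(A)\,\varphi(t)\,\E_t\!\left[\Phi(0)\,|\nabla\xi(0)|\right];
$$
at the regular value $\alpha$ this holds since $\alpha$ is a regular value a.s.\ and \eqref{Ass_sing} guarantees the formula on a set of full measure. Finally I would simplify $\Phi(0)|\nabla\xi(0)|=|\nabla\xi(0)|^{1-k^*}\int_{G^{u(0)^\perp}(d-1,k^*)}g(u(0),U)(-\nabla^2\xi(0))[U]\,dU$ and use $1-k^*=2+k-d$; dividing by $\cL^d(A)$ and restoring the factor $\gamma_{d,k}$ yields the claimed identity.

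The main obstacle is justifying the Rice/coarea argument under the mere $C^{1,1}$ hypothesis, where $\nabla^2\xi$ exists only almost everywhere (Rademacher). I would first check that $\nabla^2\xi(0)$ exists almost surely: since for each realization the exceptional set has $\cL^d$-measure zero, Fubini together with stationarity forces $\Pr[\nabla^2\xi(0)\text{ exists}]=1$, so $\Phi(0)$ is well defined a.s. The coarea formula itself applies because $\xi\in C^{1,1}$ has Lipschitz gradient, hence is $C^1$, and the regular-value assumption makes $\{\xi=\alpha\}$ a $(d-1)$-rectifiable hypersurface. The remaining care is with integrability: the density assumption \eqref{Ass_dens} supplies $\varphi(\alpha)$, and the proviso that the right-hand expectation converges is exactly what legitimizes the Fubini exchange and the conditioning. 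This integrability issue, together with the passage from ``a.e.\ $t$'' to the prescribed regular value $\alpha$, is the delicate part of the argument.
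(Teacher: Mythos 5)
Your proposal is correct and follows essentially the same route as the paper: insert Lemma~\ref{L_nabla} into \eqref{Egamma}, apply the coarea formula for $\xi$ pathwise, then use stationarity, Fubini, the disintegration \eqref{E_disint2}, and the definition of conditional expectation to identify the two sides against arbitrary test functions in the level variable (the paper carries $\varphi(\xi(x))h(\xi(x))$ through the coarea formula rather than a separate $\psi$, a purely cosmetic difference). The ``a.e.~$t$ versus the fixed regular value $\alpha$'' issue you flag is equally present in the paper's proof, since $\E_\alpha$ is only defined up to null sets of the law of $\xi(0)$, so it is not a gap specific to your argument.
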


\begin{proof}
Lemma~\ref{L_nabla} applied to \eqref{Egamma} yields
\begin{align*}  
\int \1_A(x)&g(u,U)\,\Gamma_k(Z_\alpha,d(x,u,U))\\
&=\gamma_{d,k}\int_{A\cap \partial Z_\alpha}|\nabla\xi(x)|^{-k^*}\int_{G^{u(x)^\perp}(d-1,k^*)}g(u(x),U)(-\nabla^2\xi(x))[U]\, dU\, \Ha^{d-1}(dx).
\end{align*}
Let $A\subset\R^d$ be a Borel set of unit volume and $h$ a bounded measurable real function with bounded support.
Since $x\mapsto \xi(x)$ is locally Lipschitz with Jacobian $|\nabla(x)|$, the coarea formula (see \cite[\S3.2.22]{Fed69}) gives the following identity almost surely:
\begin{align*} 
\int_{\R}&\int_{A\cap\partial Z_\alpha}|\nabla\xi(x)|^{-k^*}\int_{G^{u(x)^\perp}(d-1,k^*)}g(u(x),U)(-\nabla^2\xi(x))[U]\,dU\, \Ha^{d-1}(dx)\, \varphi(\alpha)\,h(\alpha)d\alpha \\ 
&=\int_{A}|\nabla\xi(x)|^{2+k-d}\int_{G^{u(x)^\perp}(d-1,k^*)}g(u(x),U)(-\nabla^2\xi(x))[U]\,dU\, \varphi(\xi(x))\,h(\xi(x))\,\cL^d(dx).\\
\end{align*}
Thus, taking expectations and using \eqref{E_disint2} for the left hand side and the stationarity for the right hand side, we get
\begin{align*} 
\int\int &g(u,U)\overline{\Omega}_k(Z_\alpha,d(u,U)\, h(\alpha)\,\varphi(\alpha)d\alpha \\ 
&=\gamma_{d,k}\E |\nabla\xi(0)|^{2+k-d}\int_{G^{u(0)^\perp}(d-1,k^*)}g(u(0),U)(-\nabla^2\xi(0))[U]\,dU\, \varphi(\xi(0))h(\xi(0)).
\end{align*}
The assertion follows now from the definition of conditional expectation.
\end{proof}

\section{Gaussian random fields}  \label{GRF}
We will assume now that $\xi$ is a stationary \emph{Gaussian} random field with zero mean ($\E\xi(0)=0$). Thus, the distribution of $\xi$ is characterized by its covariance function $C(x)=\E\xi(0)\xi(x)$, $x\in\R^d$. We will assume that $\xi$ is $C^{1,1}$-smooth almost surely. This implies that the second order partial derivatives of $\xi$ exist in the $L^2$ sense (otherwise, the first order derivatives would be nondifferentiable almost everywhere and almost surely, see \cite[Theorem~4]{Cambanis}). This means that the covariance function $C$ has to be of class $C^4$, but this is not sufficient for the $C^2$ or $C^{1,1}$ smoothness of paths. In fact, it seems that $C^{1,1}$-smoothness implies already $C^2$-smoothness of the paths a.s., since boundedness a.s. of paths of second order partial derivatives (which are Gaussian random fields) imply already their smoothness a.s. (\cite[\S1.3]{AT07}). See \cite{A81, AT07} for a detailed treatment of continuity and differentiability of Gaussian processes.

A very important fact about Gaussian random fields is that the partial derivatives of first and second orders
$\xi_i(x):=\frac{\partial\xi}{\partial x_i}(x)$ and $\xi_{jk}(x):=\frac{\partial^2\xi}{\partial x_ix_j}(x)$, $x\in\R^d$, $1\leq i,j,k\leq d$, are again Gaussian; moreover, they are even jointly Gaussian together with $\xi(x)$. The partial derivatives have again zero mean and the covariances at $0$ are (see \cite[Sect.~5.5]{AT07})
\begin{align*}
&\E\xi(0)\xi_i(0)=0,\quad \E\xi_i(0)\xi_j(0)=-C_{ij}(0),\quad \E\xi(0)\xi_{jk}(0)=C_{jk}(0),\\
&\quad \E\xi_i(0)\xi_{jk}(0)=0,\quad \E\xi_{ij}\xi_{kl}=C_{ijkl}(0),\qquad 1\leq i,j,k,l\leq d
\end{align*}
(again, the lower indices at $C$ denote its partial derivatives). Due to the Gaussianity, this implies that the vectors
$$(\xi(0),\xi_{jk}(0),\,1\leq j,k\leq d)\text{ and }(\xi_i(0),\,1\leq i\leq d)$$
are independent. This means that also the conditional distributions of first and second partial derivatives are mutually independent under condition $\xi(0)=\alpha$, and the conditioning does not influence the first order partial derivatives. 
Consequently, we can evaluate in the formula in Proposition~\ref{main2} separately the mean value of $(-\nabla^2\xi(0))[U]$.

\begin{lemma}  \label{L_nabla2}
Assume that the gradient $\nabla\xi(0)$ has nondegenerate distribution. Then, for any $U\in G(d,k^*)$, 
$$\E_\alpha(-\nabla^2\xi(0))[U]=\sigma^{-k^*}\Lambda[U]H_{k^*}\left(\frac\alpha\sigma\right),$$
where 
$$\Lambda:=\left(\E\frac{\partial\xi}{\partial x_i}(0) \frac{\partial\xi}{\partial x_j}(0)\right)_{i,j=1}^d$$
and 
$$H_n(t):=n!\sum_{j=0}^{\lfloor n/2\rfloor}\frac{(-1)^jt^{n-2j}} {j!(n-2j)!2^j},\quad t\in\R,$$
is the $n$th Hermite polynomial. Further, we have
$$\E_\alpha\left|(-\nabla^2\xi(0))[U]\right|\leq\const(d,k,C)$$
(the constant on the right hand side depends on the covariance function $C$, but not on $U$). 
\end{lemma}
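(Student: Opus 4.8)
The plan is to condition on $\xi(0)=\alpha$ and reduce the statement to a purely algebraic computation of the expected determinant of a Gaussian symmetric matrix, in which a symmetry of the fourth-order covariance tensor forces a large cancellation that collapses the answer onto $\Lambda[U]$ times a scalar.

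First I fix an orthonormal basis $u_1,\dots,u_{k^*}$ of $U$ and set $M_{ab}=\langle(-\nabla^2\xi(0))u_a,u_b\rangle$, so that $(-\nabla^2\xi(0))[U]=\det(M_{ab})_{a,b=1}^{k^*}$. Since $(\xi(0),\nabla^2\xi(0))$ is jointly Gaussian, the conditional law of $(M_{ab})$ given $\xi(0)=\alpha$ is Gaussian. Using $\E\xi(0)\xi_{ij}(0)=C_{ij}(0)=-\Lambda_{ij}$ and $\sigma^2=C(0)$, Gaussian regression gives conditional mean $\E_\alpha M_{ab}=\frac\alpha{\sigma^2}\langle\Lambda u_a,u_b\rangle$ and conditional covariance $\cov_\alpha(M_{ab},M_{cd})=T_{abcd}-\sigma^{-2}\langle\Lambda u_a,u_b\rangle\langle\Lambda u_c,u_d\rangle$, where $T_{abcd}=\sum_{i,j,k,l}C_{ijkl}(0)(u_a)_i(u_b)_j(u_c)_k(u_d)_l$. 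The decisive point is that $T$ is totally symmetric in its four indices, because $C_{ijkl}(0)$ is a fourth mixed partial derivative and partial derivatives commute.

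Next I expand $\E_\alpha\det(M_{ab})$ by the Wick--Isserlis formula: writing $M_{ab}=A_{ab}+\zeta_{ab}$ with $A_{ab}=\frac\alpha{\sigma^2}\langle\Lambda u_a,u_b\rangle$ the deterministic mean and $\zeta$ the centred fluctuation, every term of $\sum_{\pi\in\Sigma(k^*)}\sgn(\pi)\,\E\prod_a(A_{a\pi(a)}+\zeta_{a\pi(a)})$ is indexed by a permutation $\pi$, a pairing of a subset $S$ of positions (those contributing a $\zeta$), and, for each pair, a choice of the $T$-summand or the $-\sigma^{-2}\Lambda\Lambda$-summand of the covariance. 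I claim that every term using at least one $T$-summand cancels. To see this I fix such a term, let $\{a,b\}$ be its first pair carrying a $T$-summand (in a fixed order on pairs, which depends only on $S$, the pairing, and the summand choices, not on $\pi$), and match it with the term obtained by replacing $\pi$ by $\pi\circ(a\,b)$ while keeping the same pairing and the same $T/\Lambda$ choices. This is a fixed-point-free involution (as $\pi(a)\neq\pi(b)$); it flips $\sgn(\pi)$, leaves unchanged all factors indexed off $\{a,b\}$, and leaves the distinguished summand invariant since total symmetry of $T$ gives $T_{a\pi(a)b\pi(b)}=T_{a\pi(b)b\pi(a)}$. Hence these terms cancel in pairs.

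What remains are the terms in which every pair uses the $-\sigma^{-2}\Lambda\Lambda$-summand; there each position $a$ contributes exactly one factor $\langle\Lambda u_a,u_{\pi(a)}\rangle$, so the surviving sum factors as $\det(\langle\Lambda u_a,u_b\rangle)=\Lambda[U]$ times a scalar. Grouping by the number $j$ of pairs, and counting the $\frac{(k^*)!}{(k^*-2j)!\,j!\,2^j}$ partitions of $\{1,\dots,k^*\}$ into $j$ pairs and $k^*-2j$ singletons, the scalar is $\sum_j\frac{(k^*)!}{(k^*-2j)!\,j!\,2^j}\bigl(\frac\alpha{\sigma^2}\bigr)^{k^*-2j}\bigl(-\frac1{\sigma^2}\bigr)^{j}=\sigma^{-k^*}H_{k^*}(\alpha/\sigma)$ by the stated definition of the Hermite polynomial, which yields the formula. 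For the uniform bound I dominate $|(-\nabla^2\xi(0))[U]|$ via Hadamard's inequality by $\|\nabla^2\xi(0)\|_{\mathrm{op}}^{k^*}\leq\bigl(\sum_{i,j}\xi_{ij}(0)^2\bigr)^{k^*/2}$, which is free of $U$; its conditional Gaussian moment is finite and depends only on $d,k$ and, through the joint law of the second derivatives, on $C$, giving $\E_\alpha|(-\nabla^2\xi(0))[U]|\leq\const(d,k,C)$. The main obstacle is the combinatorial cancellation in the third paragraph: the sign-reversing involution must be defined simultaneously and consistently on the permutation, the pairing and the summand choices, and one must check that total symmetry of $T$ keeps the retained factor invariant while the sign flips.
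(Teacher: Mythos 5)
Your proof is correct, but it takes a genuinely different route through the key step. Both you and the paper begin identically: condition on $\xi(0)=\alpha$ using Gaussian regression (your conditional mean $\frac{\alpha}{\sigma^2}\langle\Lambda u_a,u_b\rangle$ and conditional covariance $T_{abcd}-\sigma^{-2}\langle\Lambda u_a,u_b\rangle\langle\Lambda u_c,u_d\rangle$ agree with the paper's, up to a sign slip in the paper's displayed conditional mean that cancels later). The paper then normalizes: it sets $Z:=\sigma\Lambda_U^{-1/2}Y\Lambda_U^{-1/2}$, so that the conditional mean becomes a multiple of the identity and the covariance takes the form $\cE(i,j,l,m)-\delta_{ij}\delta_{lm}$ with $\cE$ fully symmetric, and it then cites \cite[Corollary~11.6.3]{AT07} as a black box for $\E_\alpha\det Z=(-1)^{k^*}H_{k^*}(\alpha/\sigma)$, transforming back to pick up the factor $\sigma^{-k^*}\Lambda[U]$. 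You instead prove the expected-determinant identity from scratch: a Wick--Isserlis expansion in which the total symmetry of the fourth-derivative tensor $T$ drives a sign-reversing, fixed-point-free involution ($\pi\mapsto\pi\circ(a\,b)$ on the first $T$-pair) that kills every term containing a $T$-factor, after which the surviving all-$\Lambda$ terms factor as $\det(\langle\Lambda u_a,u_b\rangle)=\Lambda[U]$ times exactly the sum defining $\sigma^{-k^*}H_{k^*}(\alpha/\sigma)$; I checked the pair-counting $\frac{(k^*)!}{(k^*-2j)!\,j!\,2^j}$ and the powers of $\sigma$, and they match. This is essentially an inline (and slightly generalized) proof of the cited AT07 corollary: your version never forms $\Lambda_U^{-1/2}$, so it does not actually use nondegeneracy of $\nabla\xi(0)$, whereas the paper's reduction needs it. What the paper's route buys is brevity; what yours buys is self-containedness and the mild generalization that the mean and the rank-one part of the covariance need only factor through the same matrix $\Lambda_U$, not be proportional to the identity. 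For the uniform bound, the two arguments also differ but are both valid: the paper bounds entrywise moments and applies the generalized H\"older inequality to get $\E_\alpha|(-\nabla^2\xi(0))[U]|\leq k^*!\,\E_\alpha|Y|^{k^*}$, while you dominate the determinant by a power of the Frobenius norm of $\nabla^2\xi(0)$, which is manifestly independent of $U$ and has finite conditional Gaussian moments; your bound is arguably cleaner (though the inequality $|\det|\leq\|\cdot\|_{\mathrm{op}}^{k^*}$ is a singular-value bound rather than Hadamard's inequality proper, a purely cosmetic point). In both arguments the constant implicitly depends on $\alpha$ as well as on $d,k,C$, which is all the lemma requires.
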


\begin{proof}
Let $\{u_1,\dots,u_{k^*}\}$ be an orthonormal basis of a subspace $U\in G(d,k^*)$.
Since $\Lambda$ is symmetric and positive definite by assumptions, the matrix 
$\Lambda_U=(\lambda_{ij})_{i,j=1}^{k^*}$ with $\lambda_{ij}:=\langle u_i,\Lambda u_j\rangle$ is symmetric and positive definite as well. Hence, there exists a symmetric matrix $\Lambda_U^{-1/2}$ such that $\Lambda_U^{-1/2}\Lambda_U\Lambda_U^{-1/2}=I$. By definition, $(-\nabla^2\xi(x))[U]=(-1)^{k^*}\det Y$ for a random matrix $Y$ with zero-mean Gaussian entries
$$Y_{ij}=\frac{\partial^2\xi}{\partial u_i\partial u_j}(0),\quad 1\leq i,j\leq k^*$$
with $\E \;\xi(0)Y_{ij}=-\lambda_{ij}$ and $\E\; Y_{ij}Y_{lm}= C^U_{ijlm}(0)$ (the fourth order partial derivative of $C$ at $0$ in directions $u_i,u_j,u_l,u_m$). Consequently, the conditional moments are
$$\E_\alpha Y_{ij}=\sigma^{-2}\lambda_{ij}\alpha,\quad
\cov_\alpha(Y_{ij},Y_{lm})=C^U_{ijlm}(0)-\sigma^{-2}\lambda_{ij}\lambda_{lm}.$$
Proceeding as in \cite[Proof of Lemma~11.7.1]{AT07}, we consider the transformation
$$Z:=\sigma \Lambda_U^{-1/2}Y\Lambda_U^{-1/2}$$
and find that
$$\E_\alpha Z_{ij}=\sigma^{-1}\alpha \delta_{ij},\quad \cov_\alpha(Z_{ij},Z_{lm})=\cE(i,j,l,m)-\delta_{ij}\delta_{lm}$$
with a function $\cE(i,j,l,m)$ symmetric in $i,j,l,m$ (a linear combination of fourth order partial derivatives of $C$). Now \cite[Corollary~11.6.3]{AT07} yields
$$\E_\alpha\det Z=(-1)^{k^*}H_{k^*}\left(\frac\alpha\sigma\right),$$
and using the definition of $Z$, we obtain the first result.

Note that the coefficients $\lambda_{ij}$ are bounded in absolute value by the norm of the matrix 
$\left(\frac{\partial^2 C}{\partial x_i\partial x_j}(0)\right)$. Hence, all the means and variances of $Y_{ij}$ are bounded and, consequently, the $k^*$th absolute moment of all $Y_{ij}$ is bounded by that of a single Gaussian variable $Y$ whose mean and variance do not depend on $U$. By the generalized H\"older inequality,
$$\E_\alpha\left|(-\nabla^2\xi(x))[U]\right|\leq k^*!\,\E_\alpha |Y|^{k^*}<\infty,$$
which completes the proof.
\end{proof}

As concerns Theorem~\ref{T2}, we start by proving the first (regularity) statement and the ``touching condition'' \eqref{touch2}. We will use the following auxiliary lemma.

\begin{lemma}  \label{L_aux}
Let $B\subset\R^d$ be such that for some $\alpha,p>0$, $\Ha^p(B)<\infty$ and 
$$\Ha^p(B\cap B(x,\delta))\geq \alpha\delta^p, \quad x\in B,\, 0<\delta<1.$$ 
Let $(\eta(x):\, x\in B)$ be a random field with values in $\R^k$ which is a.s.\ Lipschitz, and assume that there exist $\beta>0$ and $q>p$ such that
$$\Pr[|\eta(x)|\leq\delta]\leq\beta\delta^q,\quad x\in B,\, 0<\delta<1.$$
Then
$$\Pr\left[\exists x\in B:\, \eta(x)=0\right]=0.$$
\end{lemma}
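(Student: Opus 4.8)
The plan is to combine a covering argument at a small scale $\delta$ with the small-ball estimate and a union bound, the whole point being that $q>p$. For $\delta\in(0,1)$ I would choose a maximal $\delta$-separated subset $\{x_1,\dots,x_N\}$ of $B$. Maximality forces $B\subseteq\bigcup_{i=1}^N B(x_i,\delta)$, while $\delta$-separation makes the balls $B(x_i,\delta/2)$ pairwise disjoint. Since each $x_i\in B$, the lower regularity hypothesis gives $\Ha^p(B\cap B(x_i,\delta/2))\ge\alpha(\delta/2)^p$; combining this with the disjointness and $\Ha^p(B)<\infty$ yields $N\,\alpha(\delta/2)^p\le\Ha^p(B)$, so that $N$ is finite and
\[
N\le c\,\delta^{-p},\qquad c:=\frac{2^p\,\Ha^p(B)}{\alpha}.
\]

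Next I would estimate the target event on the sub-event where the (random) Lipschitz constant is controlled. Fix $L_0>0$ and restrict to $\{\lip\eta\le L_0\}$. If $\eta$ is $L_0$-Lipschitz and $\eta(x^\ast)=0$ for some $x^\ast\in B$, then $x^\ast\in B(x_i,\delta)$ for some $i$ by the covering, whence $|\eta(x_i)|=|\eta(x_i)-\eta(x^\ast)|\le L_0\delta$. Thus, for $\delta<1/L_0$,
\[
\{\exists x\in B:\eta(x)=0\}\cap\{\lip\eta\le L_0\}\subseteq\bigcup_{i=1}^N\{|\eta(x_i)|\le L_0\delta\}.
\]
Since $x_i\in B$ and $L_0\delta<1$, the small-ball hypothesis gives $\Pr[\,|\eta(x_i)|\le L_0\delta\,]\le\beta(L_0\delta)^q$, and a union bound together with $N\le c\delta^{-p}$ produces
\[
\Pr\bigl[\{\exists x\in B:\eta(x)=0\}\cap\{\lip\eta\le L_0\}\bigr]\le c\,\beta\,L_0^{\,q}\,\delta^{\,q-p}\xrightarrow[\delta\to0]{}0,
\]
the limit being $0$ precisely because $q>p$.

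Finally I would remove the truncation. Because $\eta$ is a.s.\ Lipschitz, $\Pr[\lip\eta=\infty]=0$. By the previous step, for each $n\in\N$ the event $\{\exists x\in B:\eta(x)=0\}\cap\{\lip\eta\le n\}$ is contained in a null set; hence so is the countable union $\{\exists x\in B:\eta(x)=0\}\cap\{\lip\eta<\infty\}$, and therefore so is $\{\exists x\in B:\eta(x)=0\}$, which gives $\Pr[\exists x\in B:\eta(x)=0]=0$.

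The step I expect to be the crux is the matching of the two scales: the cover contributes $\sim\delta^{-p}$ balls while the small-ball bound contributes $\sim\delta^{q}$, so the estimate closes exactly because $q>p$. The remaining care concerns the random, possibly unbounded Lipschitz constant, handled by truncating at $L_0$ and letting $L_0\to\infty$, and the measurability of the uncountable event $\{\exists x\in B:\eta(x)=0\}$, which need never be asserted: for each $\delta$ the displayed inclusion dominates it, on $\{\lip\eta\le L_0\}$, by a finite union of measurable events of total probability at most $c\beta L_0^q\delta^{q-p}$, so the event is contained in a null set.
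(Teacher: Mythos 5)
Your proof is correct, but it takes a genuinely different route from the paper's. You run a covering argument: a maximal $\delta$-separated net in $B$, whose cardinality the Ahlfors lower regularity bounds by $N\le 2^p\Ha^p(B)\alpha^{-1}\delta^{-p}$, followed by a union bound of $N$ small-ball probabilities $\beta(L_0\delta)^q$ over the net points. The paper never builds a net: it uses the regularity hypothesis only once, at the (random) vanishing point $x$, to conclude that on the event $\{\exists x\in B:\eta(x)=0\}\cap\{\lip\eta\le c\}$ the sublevel set $\{y\in B:|\eta(y)|\le c\delta\}$ has $\Ha^p$-measure at least $\alpha\delta^p$; Markov's (Chebyshev's) inequality then bounds the probability of this by $\alpha^{-1}\delta^{-p}\,\E\,\Ha^p\{y\in B:|\eta(y)|\le c\delta\}$, and Fubini's theorem converts the expectation into $\int_B\Pr[|\eta(y)|\le c\delta]\,\Ha^p(dy)\le\Ha^p(B)\beta(c\delta)^q$. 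Both arguments close for exactly the reason you identify, the mismatch $\delta^{-p}$ versus $\delta^{q}$ with $q>p$, and both remove the random Lipschitz constant by the same truncation $c=L_0\to\infty$; even the final bounds agree up to the factor $2^p$. The trade-offs: the paper's first-moment argument is shorter and avoids the packing technicalities you must handle (countability and finiteness of the net, near-disjointness of the half-radius balls), but its Fubini step implicitly needs joint measurability of $(\omega,y)\mapsto\eta(y)(\omega)$ and $\Ha^p$-measurability of the random sublevel set, whereas your union bound only ever evaluates the field at countably many fixed points, which also lets you dispose of the measurability of $\{\exists x\in B:\eta(x)=0\}$ by inclusion in a measurable null set, exactly as you noted.
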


\begin{remark}
A set $B$ with the property above is called \emph{Ahlfors lower $p$-regular}.
\end{remark}

\begin{proof}
By assumption, there exists a random variable $C>0$ such that
$$|\eta(x)-\eta(y)|\leq C |y-x|\quad \text{a.s.}$$
Thus, if $\eta(x)=0$ for some $x\in B$ then $|\eta(y)|\leq C\delta$ for all $y\in B\cap B(x,\delta)$ and, hence, for any $c>0$ and $0<\delta<c^{-1}$,
$$
\Pr([C\leq c]\cap[\exists x\in B:\, \eta(x)=0])
\leq \Pr\left[\Ha^p\{ y\in B:\, |\eta(y)|\leq c\delta\}\geq\alpha\delta^p\right].$$
Applying the Chebyshev's inequality and Fubini theorem we get
\begin{align*}
\Pr([C\leq c]\cap[\exists x\in B:\, \eta(x)=0])&\leq \alpha^{-1}\delta^{-p}\,\E \Ha^p\{ y\in B:\, |\eta(y)|\leq c\delta\}\\
&=\alpha^{-1}\delta^{-p}\int_B\Pr[|\eta(y)|\leq c\delta]\,\Ha^p(dy)\\
&\leq\alpha^{-1}\delta^{-p}\,\Ha^p(B)\beta(c\delta)^q.
\end{align*}
The last expression tends to zero as $\delta\to 0$, hence,
$$\Pr([C\leq c]\cap[\exists x\in B:\, \eta(x)=0])=0.$$
Letting $c\to\infty$, we obtain the result.
\end{proof}

\begin{lemma}
Let $\xi$ be as in Theorem~\ref{T2}. Then for any $\alpha\in\R$ and any compact set $W\subset\R^d$ with positive reach,
\begin{enumerate}
\item[(i)] $\alpha$ is a regular value of $\xi$ almost surely,
\item[(ii)] $\Pr[Z_\alpha\text{ and }W\text{ touch}]=0$.
\end{enumerate}
\end{lemma}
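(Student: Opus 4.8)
The plan is to prove both statements by applying Lemma~\ref{L_aux} to suitable random fields built from $\xi$. The key point is that the conclusions assert that certain "bad" events happen with probability zero, and Lemma~\ref{L_aux} is precisely designed to show that a Lipschitz vector-valued random field avoids the value $0$ on an Ahlfors-regular set, provided the single-point small-ball probabilities decay fast enough. So in each case I would (a) identify the right ambient set $B$, (b) identify the right random field $\eta$ whose vanishing encodes the bad event, (c) verify the Ahlfors lower $p$-regularity of $B$, (d) verify the Lipschitz property of $\eta$, and (e) estimate the small-ball probability $\Pr[|\eta(x)|\le\delta]$.

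For part (i), the bad event is that $\alpha$ is a critical value, i.e.\ that there exists $x$ with $\xi(x)=\alpha$ and $\nabla\xi(x)=0$ simultaneously. The natural field is $\eta(x):=(\xi(x)-\alpha,\nabla\xi(x))\in\R^{d+1}$, which is a.s.\ Lipschitz because $\xi$ is $C^{1,1}$. The vanishing of $\eta$ is exactly the existence of a singular point at level $\alpha$. Since we seek this on all of $\R^d$, I would localize to a large ball (or exhaust $\R^d$ by a countable union of compact balls $B=\overline{B}(0,R)$, each Ahlfors lower $d$-regular with $p=d$), and then the small-ball estimate must give exponent $q>d$. Here I would use that $(\xi(0),\nabla\xi(0))$ is a nondegenerate Gaussian vector in $\R^{d+1}$ (its covariance is nondegenerate because $\nabla\xi(0)$ is nondegenerate by hypothesis and $\xi(0)$ has positive variance $\sigma^2$ and is independent of the gradient), whence by stationarity $\Pr[|\eta(x)|\le\delta]\le\const\cdot\delta^{d+1}$ uniformly in $x$, giving $q=d+1>d=p$. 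Lemma~\ref{L_aux} then yields $\Pr[\exists x\in B:\eta(x)=0]=0$, and summing over $R\in\N$ gives (i) on all of $\R^d$.

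For part (ii), the bad event is that $Z_\alpha$ and $W$ touch, meaning there is a boundary point $x\in\partial Z_\alpha\cap\partial W$ at which the outer normal of $Z_\alpha$, namely $u(x)=\nabla\xi(x)/|\nabla\xi(x)|$, is antiparallel to some normal of $W$; equivalently $-u(x)\in\Nor(W,x)$. The clean way to capture this is to work on the normal bundle of $W$: take $B:=\nor W$, which is a compact $(d-1)$-rectifiable set and (since $W$ has positive reach) is Ahlfors lower $(d-1)$-regular, so $p=d-1$. On this set define the random field $\eta(x,v):=(\xi(x)-\alpha,\ \nabla\xi(x)+|\nabla\xi(x)|\,v)$, whose vanishing at some $(x,v)\in\nor W$ encodes exactly: $x\in\partial Z_\alpha$ (from $\xi(x)=\alpha$) and $u(x)=-v$ with $v\in\Nor(W,x)$, i.e.\ touching. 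This $\eta$ is a.s.\ Lipschitz in $(x,v)$. The small-ball estimate again uses nondegeneracy: conditionally the relevant Gaussian vector is nondegenerate, giving $\Pr[|\eta(x,v)|\le\delta]\le\const\cdot\delta^{d+1}$, hence $q=d+1>d-1=p$, and Lemma~\ref{L_aux} gives probability zero.

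The main obstacle I expect is the uniform small-ball estimate in part (ii), where the field is parametrized by $(x,v)\in\nor W$ and one must show the exponent $q$ exceeds $p=d-1$ \emph{uniformly} over this compact parameter set. The subtlety is that near points where $\nabla\xi(x)$ is small the term $|\nabla\xi(x)|\,v$ degenerates, so the map $\eta$ does not obviously have a nondegenerate Gaussian law in all $d+1$ coordinates simultaneously; one must argue that even so the joint small-ball probability is $O(\delta^{d+1})$ — for instance by first controlling $\Pr[|\xi(x)-\alpha|\le\delta]=O(\delta)$ and then, on that event, bounding the probability that $\nabla\xi(x)$ lands in the narrow cone/ball of directions near $-|\nabla\xi(x)|v$ by $O(\delta^d)$ using the nondegeneracy of $\nabla\xi(x)$, with constants uniform in $v$ because the Gaussian density is bounded. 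Once this uniform bound is in hand, the Ahlfors regularity of $\nor W$ (which is where positive reach of $W$ is used) and the Lipschitz property of $\xi$ close the argument.
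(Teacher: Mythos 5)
Your part (i) is exactly the paper's argument: the same field $\eta(x)=(\xi(x)-\alpha,\nabla\xi(x))$, the same use of Lemma~\ref{L_aux} with $p=d$ and $q=d+1$ (the paper takes $B=[0,1]^d$ and invokes stationarity rather than exhausting $\R^d$ by balls, which is immaterial), and the same source of the small-ball bound, namely the bounded joint density of $(\xi(0),\nabla\xi(0))$ in $\R^{d+1}$. Part (ii) is also structurally the paper's proof: same set $B=\nor W$, same Ahlfors lower $(d-1)$-regularity coming from positive reach, and a field on $\nor W$ whose zero set contains the touching event. The only difference is that the paper uses the normalized field $\eta(x,v)=(\xi(x)-\alpha,\,u(x)+v)$ with $u(x)=\nabla\xi(x)/|\nabla\xi(x)|$, obtaining $q=d$ from the bounded density of $u(x)$ on $S^{d-1}$ and its independence from $\xi(x)$, whereas you keep the unnormalized gradient; your variant is in fact slightly cleaner on the Lipschitz side, since it avoids dividing by $|\nabla\xi(x)|$.

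However, your small-ball estimate in part (ii) is stated incorrectly: the bound $\Pr[|\eta(x,v)|\le\delta]\le\const\cdot\delta^{d+1}$ is false, as is the intermediate claim that $\nabla\xi(x)$ lands in the relevant set with probability $O(\delta^{d})$. The constraint $|\nabla\xi(x)+|\nabla\xi(x)|\,v|\le\delta$ does not confine $\nabla\xi(x)$ to a ball of radius $\delta$; it confines it to a tube around the half-line $\{-tv:\,t\ge 0\}$. Indeed, writing $g=-tv+w$ with $w\perp v$, the condition reads $(|g|-t)^2+|w|^2\le\delta^2$, and every $g$ in the half-tube $\{-tv+w:\ t\ge 0,\ |w|\le\delta/\sqrt{2}\}$ satisfies it, since then $0\le|g|-t\le|w|$. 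A nondegenerate Gaussian charges such a tube with probability of order exactly $\delta^{d-1}$ (and no smaller), so the correct exponent for the gradient component is $d-1$, and for your field $q=1+(d-1)=d$ by independence of $\xi(x)$ and $\nabla\xi(x)$ --- the same exponent the paper gets. The error is harmless for the conclusion, because $q=d>d-1=p$ still meets the hypothesis of Lemma~\ref{L_aux} (uniformity in $v$ follows from $\{g:|g+|g|v|\le\delta\}\subset\{g:|p_{v^\perp}g|\le\delta\}$ and the uniformly bounded density of $p_{v^\perp}\nabla\xi(x)$), but the exponent bookkeeping must be corrected before the argument is sound as written.
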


\begin{proof}
(i) Set $\eta(x)=(\xi(x)-\alpha,\nabla\xi(x))$, $x\in\R^d$. $\eta$ is a stationary random field and since $\xi(0)$ and $\nabla\xi(0)$ are independent, their joint distribution has a bounded density with respect to $\cL^{d+1}$. Thus, $\eta$ fulfills the assumption of Lemma~\ref{L_aux} with $q=d+1$, and applying this lemma with $B=[0,1]^d$ and $p=d$, we obtain that
$$\Pr\left[\exists x\in [0,1]^d:\, \xi(x)=\alpha,\, \nabla(x)=0\right] =0.$$
Due to stationarity, this already implies (i).

(ii) Since $\nor W$ is a compact $(d-1)$-dimensional Lipschitz manifold (see \cite[Corollary~4.22]{RZ19}), it is Ahlfors lower $(d-1)$-regular (see \cite[Proposition~1.12]{RZ19}). Set $\eta(x,v):=(\xi(x)-\alpha,u(x)+v)$, $x\in \nor W$. The random unit vector $u(x)$ has a bounded density w.r.t. $\Ha^{d-1}$ on the unit sphere (cf.\ the proof of Theorem~\ref{T2} below where this density is obtained explicitly) and since $u(x)$ and $\xi(x)$ are independent, $\eta$ fulfills the assumption of Lemma~\ref{L_aux} with $q=d$. Thus, this lemma gives that 
$$\Pr[\exists (x,v)\in\nor W:\, \xi(x)=\alpha, u(x)+v=0]=0,$$
which is exactly the assertion (ii).
\end{proof}

Now we can prove Theorem~\ref{T2} which yields an expression of the density of the flag measure of the excursion set by its density with respect to the invariant measure $\mu_k$ on $F^\perp(d,k^*)$.

\begin{proof}[Proof of Theorem~\ref{T2}]
Since $(\xi(0),\nabla^2\xi(0))$ and $\nabla\xi(0)$ are independent, the formula from Proposition~\ref{main2} can be rewritten using Lemma~\ref{L_nabla2} as
\begin{align*}
\int &g(u,U)\, \overline{\Omega}_k(Z_\alpha,d(u,U))\\
&=\gamma_{d,k}\frac 1{\sqrt{2\pi}\sigma}e^{-\frac{\alpha^2}{2\sigma^2}}\E\left( |\nabla\xi(0)|^{2+k-d}\int_{G^{u(0)^\perp}(d-1,k^*)}g(u(0),U)\E_\alpha(-\nabla^2\xi(0))[U]\, dU\right)\\
&=\gamma_{d,k}\frac 1{\sqrt{2\pi}\sigma}e^{-\frac{\alpha^2}{2\sigma^2}}
\sigma^{-k^*}H_{k^*}\left(\frac\alpha\sigma\right)\E\left( |\nabla\xi(0)|^{2+k-d}\int_{G^{u(0)^\perp}(d-1,k^*)}g(u(0),U)\Lambda[U]\, dU\right).
\end{align*}
(The convergence of the integral was guarantied by the inequality in Lemma~\ref{L_nabla2}.)
In order to express the mean value, we compute the joint density of $(|\nabla\xi(0)|,u(0))$. The probablity density function of $\nabla\xi(0)$ is
$$f(x):=\frac 1{(2\pi)^{d/2}\sqrt{\det\Lambda}}\exp\left(-\frac 12 x^T\Lambda^{-1}x\right).$$
The Lipschitz bijection $(r,u)\mapsto ru$ from $(0,\infty)\times S^{d-1}$ onto $\R^d\setminus\{0\}$ has Jacobian $r^{d-1}$, hence, by the area formula, the joint density of $(|\nabla\xi(0)|,u(0))$ is
$$F(r,u)=\frac 1{(2\pi)^{d/2}\sqrt{\det\Lambda}}r^{d-1}\exp\left(-\frac{r^2}2 u^T\Lambda^{-1}u\right),\quad (r,u)\in (0,\infty)\times S^{d-1}.$$
A direct computation yields
$$\int_0^\infty r^{2+k-d}F(r,u)\, dr=\frac{2^{k/2}\Gamma\left(\tfrac{k}2+1\right)}{(2\pi)^{d/2}\sqrt{\det\Lambda}}\left(u^T\Lambda^{-1}u\right)^{-\frac k2-1}$$
and, since (using \eqref{beta})
$$\gamma_{d,k}\Gamma(\tfrac k2+1)=\frac 1{\beta_{d,k}}\frac{\Gamma(\frac d2)}{2\pi^{\frac{d-k}2}}=\frac 1{\beta_{d,k}}\frac{\pi^{\frac k2}}{O_{d-1}},$$
we arrive at,
\begin{align*}
\int g(u,U)\, \overline{\Omega}_k(Z_\alpha,d(u,U))
&= \frac{\beta_{d,k}^{-1}\;(2\pi)^{\frac{k-d-1}2}}{O_{d-1}\sigma^{d-k}\sqrt{\det\Lambda}}e^{-\frac{\alpha^2}{2\sigma^2}}H_{k^*}\left(\frac\alpha\sigma\right)\\
&\times\int_{S^{d-1}}\left(u^T\Lambda^{-1}u\right)^{-\frac k2-1}\int_{G^{u^\perp}(d-1,k^*)}g(u,U)\Lambda[U]\, dU\, \Ha^{d-1}(du),
\end{align*}
which implies the assertion.
\end{proof}

\section{Polytopal window}
The aim of this section is to evaluate the formula of Corollary~\ref{C2} in the case of a convex polytope $F$ and, in particular, to prove Corollary~\ref{C4}.

Let $W\subset\R^d$ be a convex polytope. Its unit normal bundle can be represented as
$$\nor W=\bigcup_{l=0}^d \sum_{F\in\cF_l(W)} (\relint F)\times\Gamma_F,$$
where $\cF_l(W)$ is the family of all $l$-dimensional faces of $W$ and given a face $F$, $\Gamma_F$ denotes the set of all unit outer normals to $W$ at (any) point of $\relint F$ (cf.\ \cite[Section~4.2]{Sch14}). Hence, using \eqref{def_Gamma}, the $l$th flag measure of $W$ is given by
$$\int g(v,V)\,\Omega_l(W,d(v,V))=\gamma_{d,l}\sum_{F\in\cF_l(W)}|F|\int_{\Gamma_F}\int_{G^{v^\perp}(d-1,l^*)}\langle F^\perp\cap v^\perp,V\rangle^2\, dV\,dv, $$
where we write for brevity $|F|:=\Ha^l(F)$, $dv:=\Ha^{d-1}(dv)$ and $F^\perp\in G(d,d-l)$ is the $(d-l)$-subspace perpendicular to $F$.

Let further $\xi$ be a centred stationary Gaussian random field as in Theorem~\ref{T2} and $Z_\alpha$ its excursion set. Corollary~\ref{C2} implies
$$\E\chi(Z_\alpha\cap W)=\sum_{k=0}^{d-1}P_k+\overline{V}_d(Z_\alpha),$$
where
\begin{align*}
P_k:=&\gamma_{d,d-k}\sum_{F\in\cF_{d-k}(W)}|F| \int_{\Gamma_F}\int_{S^{d-1}}F_k(\angle(u,v))\int_{G^{v^\perp}(d-1,k-1)}\int_{G^{u^\perp}(d-1,d-1-k)}\\
&\langle F^\perp\cap v^\perp,V\rangle^2\varphi_k(u,U,v,V)\,\mathbf{c}\frac{\Lambda[U]}{\sqrt{\det\Lambda}}\, dU\, dV\, (u^T\Lambda^{-1}u)^{-\frac k2-1}\, du\, dv,
\end{align*}
with a constant
$$\mathbf{c}:=\frac{\beta_{d,k}^{-1}(2\pi)^{\frac{k-d-1}2}}{O_{d-1}\sigma^{d-k}}e^{-\frac{\alpha^2}{2\sigma^2}}H_{k^*}\left(\frac\alpha\sigma\right).$$
Let $\lambda_i,b_i$ be the eigenvalues and eigenvectors of $\Lambda$, $i=1,\dots, d$. Lemma~\ref{L_bilin} implies that
$$\Lambda[U]=\sum_{|I|=k^*}\lambda_I\langle U,B_I\rangle^2=
\sum_{|I|=k^*}\lambda_I|p_{B_I^\perp}u|^2\langle U,B_I^*\rangle^2,$$
where $B_I^*:=p_{u^\perp}(B_I)$ (orthogonal projection of $B_I$ into $u^\perp$). Thus, Equation~\eqref{E_varphi} implies
\begin{align*}
P_k=\frac{\mathbf{c}}{\gamma_{d,k}}\frac 1{\sqrt{\det\Lambda}} \sum_{F\in\cF_{d-k}(W)}&|F| \int_{\Gamma_F}\int_{S^{d-1}} \\
&\sum_{|I|=k^*}\lambda_I \|F^\perp\wedge B_I^*\wedge u\|^2 |p_{B_I^\perp}u|^2 \frac{F_k(\angle(u,v))}{(u^T\Lambda^{-1}u)^{\frac k2+1}} \, du\, dv.
\end{align*}
Note that $\|F^\perp\wedge B_I^*\wedge u\|^2 |p_{B_I^\perp}u|^2=\|F^\perp\wedge B_I\wedge u\|^2$, and, using Lemma~\ref{L_bilin} and \eqref{E_u^perp}, we get
\begin{align*}
\sum_{|I|=k^*}\lambda_I \|F^\perp\wedge B_I\wedge u\|^2
=&\sum_{|I|=k^*}\lambda_I\langle F_0\cap u^\perp,B_I\rangle^2|p_{F_0}u|^2\\
=&\Lambda[F_0\cap u^\perp]|p_{F_0}u|^2\\
=&\Lambda[F_0]\, (p_{F_0}u)^T\Lambda^{-1}(p_{F_0}u),
\end{align*}
where $F_0$ denotes the linear hull of the face $F$ shifted to the origin and $p_{F_0}$ is the orthogonal projection to $F_0$. Consequently,
$$P_k=\frac{\mathbf{c}}{\gamma_{d,k}} \sum_{F\in\cF_{d-k}(W)}|F| \frac{\Lambda[F_0]}{\sqrt{\det\Lambda}} \int_{\Gamma_F}\int_{S^{d-1}} F_k(\angle(u,v)) \frac{(p_{F_0}u)^T\Lambda^{-1}(p_{F_0}u)}{(u^T\Lambda^{-1}u)^{\frac k2+1}}\, du\, dv.$$
The last expression can be simplified significantly in the special case when $W$ is a \emph{zonotope}. We will assume that $W$ has a vertex at the origin, which means that there exists a finite family of vectors $v_1,\dots,v_m\in\R^d$ such that $W$ can be expressed as the Minkowski sum of segments
$$W=[0,v_1]\oplus\dots\oplus[0,v_m].$$
Let $\cF_l^0(W)$ denote the family of all facets $F\in\cF_l(W)$ containing the origin. The crucial observation is that, given $F\in\cF_l^0(W)$, the sets $\Gamma_{F'}$ for facets $F'\in\cF_l(W)$ parallel with $F$ (denoted $F'\| F$) form a partition of the unit sphere $S^{d-l-1}$ in $F^\perp$. Hence, by \cite[Eq.~(6.15)]{RZ19}, 
$$\sum_{{F'\in\cF_{d-k}(W)}\atop{F'\|F}}\int_{\Gamma_{F'}}F_k(\angle(u,v))\, dv=\frac{|p_{F_0}u|^{k-d}}{O_{d-k-1}}.$$
Consequently, we obtain for a zonotope $W$
\begin{equation} \label{zonotope}
P_k=\frac{\mathbf{c}}{\binom{d-1}{k}} \sum_{F\in\cF_{d-k}^0(W)} |F| \frac{\Lambda[F]}{\sqrt{\det\Lambda}} \int_{S^{d-1}} \frac{(p_{F_0}u)^T\Lambda^{-1}(p_{F_0}u)}{|p_{F_0}u|^{d-k}(u^T\Lambda^{-1}u)^{\frac k2+1}}\, du.
\end{equation}
In order to evaluate the last integral, we use the method of Gaussian random vectors as in the proof of Theorem~2. Let $X$ be a $d$-dimensional Gaussian random vector with mean $0$ and variance matrix $\Lambda$. Then,
\begin{align*}
\E\frac{(p_{F_0}X)^T\Lambda^{-1}(p_{F_0}X)}{|p_{F_0}X|^{d-k}}
=&\int_{R^d}\frac{(p_{F_0}x)^T\Lambda^{-1}(p_{F_0}x)}{|p_{F_0}x|^{d-k}}\frac 1{(2\pi)^{\frac d2}\sqrt{\det\Lambda}}e^{-\frac{x^T\Lambda^{-1}x}2}\, dx\\
=&\frac 1{(2\pi)^{\frac d2}\sqrt{\det\Lambda}} \int_{S^{d-1}}\int_0^\infty 
\frac{r^2(p_{F_0}u)^T\Lambda^{-1}(p_{F_0}u)}{r^{d-k}|p_{F_0}u|^{d-k}}e^{-\frac{r^2}2u^T\Lambda^{-1}u}\, r^{d-1}\, dr\, du\\
=&\frac 1{(2\pi)^{\frac d2}\sqrt{\det\Lambda}} \int_{S^{d-1}}
\frac{(p_{F_0}u)^T\Lambda^{-1}(p_{F_0}u)}{|p_{F_0}u|^{d-k}}
\frac{2^{\frac k2}\Gamma(\tfrac k2+1)}{(u^T\Lambda^{-1}u)^{\frac k2+1}}\, du\\
=&\frac{\Gamma(\tfrac k2+1)}{2^{\frac{d-k}2}\pi^{\frac d2}\sqrt{\det\Lambda}} \int_{S^{d-1}} \frac{(p_{F_0}u)^T\Lambda^{-1}(p_{F_0}u)}{|p_{F_0}u|^{d-k}(u^T\Lambda^{-1}u)^{\frac k2+1}}\, du.
\end{align*}
Denoting $Y:=p_{F_0}X$, which is again Gaussian with zero mean and variance matrix $\Lambda_{F_0}$, and $Z:=\Lambda_{F_0}^{-\frac 12}Z$, which is Gaussian with unit variance matrix, we compute similarly
\begin{align*}
\E\frac{(p_{F_0}X)^T\Lambda^{-1}(p_{F_0}X)}{|p_{F_0}X|^{d-k}}
=&\E\frac{Y^T\Lambda_{F_0}^{-1}Y}{|Y|^{d-k}}
=\E\frac{|Z|^2}{(Z^T\Lambda_{F_0} Z)^{\frac{d-k}2}}\\
=&\frac 1{(2\pi)^{\frac{d-k}2}}\int_{\R^{d-k}}\frac{|z|^2}{(z^T\Lambda_{F_0} z)^{\frac {d-k}2}}e^{-\frac{|z|^2}2}\, dz\\
=&\frac 1{(2\pi)^{\frac{d-k}2}}\int_{S^{d-k-1}}\int_0^\infty\frac{r^2e^{-r^2/2}}{r^{d-k}}r^{d-k-1}\, dr\frac{du}{(u^T\Lambda_{F_0} u)^{\frac{d-k}2}}\\
=&\frac 1{(2\pi)^{\frac{d-k}2}}O_{d-k-1}\sqrt{\det\Lambda_{F_0}^{-1}}.
\end{align*}
The last equality follows by expressing the total density of a Gaussian vector with zero mean and variance matrix $\Lambda_{F_0}^{-1}$ in spherical coordinates:
\begin{align*}
1=&\int_{\R^{d-k}}\frac 1{(2\pi)^{\frac{d-k}2}\sqrt{\det\Lambda_{F_0}^{-1}}}e^{-\frac{z^T\Lambda_{F_0}z}2}\, dz\\
=&\frac 1{(2\pi)^{\frac{d-k}2}\sqrt{\det\Lambda_{F_0}^{-1}}}\int_{S^{d-k-1}}\int_0^\infty e^{-\frac{r^2}2{u^T\Lambda_{F_0}u}}\, r^{d-k-1}\, dr\, du\\
=&O_{d-k-1}^{-1}\sqrt{\det\Lambda_{F_0}}\int_{S^{d-k-1}}\frac{du}{(u^T\Lambda_{F_0} u)^{\frac{d-k}2}}.
\end{align*}
Putting the last two equalities together, we obtain
$$\frac{1}{\sqrt{\det\Lambda}} \int_{S^{d-1}} \frac{(p_{F_0}u)^T\Lambda^{-1}(p_{F_0}u)}{|p_{F_0}u|^{d-k}(u^T\Lambda^{-1}u)^{\frac k2+1}}\, du
=\frac{\pi^{\frac k2}O_{d-1-k}}{\Gamma(\frac k2+1)}\sqrt{\Lambda[F_0]}
=\frac{2\pi^{\frac d2}}{\Gamma(\frac k2+1)\Gamma(\frac{d-k}2)}\sqrt{\Lambda[F_0]}.$$
Inserting the last equality into \eqref{zonotope}, and using \eqref{beta}, we conclude with 
$$P_k= \frac{1}{(2\pi)^{\frac{d-k+1}2}\sigma^{d-k}}e^{-\frac{\alpha^2}{2\sigma^2}}H_{k^*}\left(\frac\alpha\sigma\right)\sum_{F\in\cF_{d-k}^0(W)} |F| \sqrt{\Lambda[F_0]}.$$
This proves Corollary~\ref{C4}.

\section{Curvature densities of excursion sets}

Let $\xi$ be a centred stationary Gaussian random field as in Theorem~\ref{T2}.  We evaluate the $k$th curvature density of the excursion set $Z_\alpha$. This will prove Corollary~\ref{C3} in the isotropic case.

By definition,
$$\overline{V}_k(Z_\alpha)=\int q_k\, d\mu_k.$$ 
Note that for any $U\in G^{u^\perp}(d-1,k^*)$, we have by Lemma~\ref{L_bilin}, $\Lambda[U]=\sum_{|I|=k^*}\lambda_I\langle U,B_I\rangle^2$, where 
$\lambda_I=\prod_{i\in I}\lambda_i$, $B_I=\Lin\{b_i:\, i\in I\}$ and $\lambda_i,b_i$, $i=1,\dots,d$, are the principal values and corresponding principal directions of $\Lambda$. Note further that
$$\langle U,B_I\rangle^2=|p_{B_I^\perp}u|^2\langle U,B_I^*\rangle^2,$$
where $B_I^*\in G^{u^\perp}(d-1,k^*)$ is the orthogonal projection of $B_I$ into $u^\perp$. Since
$$\int_{G^{u^\perp}(d-1,k^*)}\langle U,V\rangle^2\, dU=\binom{d-1}{k}^{-1},\quad V\in G^{u^\perp}(d-1,k^*)$$
(see \cite[p.~641]{HRW13}), we obtain
\begin{align*}
\int_{G^{u^\perp}(d-1,k^*)}\Lambda[U]\, dU &= \sum_{|I|=k^*}\lambda_I |p_{B_I^\perp}u|^2 \int_{G^{u^\perp}(d-1,k^*)}  \langle U,B_I^*\rangle^2\, dU\\
&=\binom{d-1}{k}^{-1}\sum_{|I|=k^*}\lambda_I\sum_{j\not\in I}\langle u,b_j\rangle^2\\
&=\sum_{j=1}^d\lambda_{(j)}\langle u,b_j\rangle^2,
\end{align*}
where 
$$\lambda_{(j)}:=\binom{d-1}{k}^{-1}\sum_{|I|=k^*,\, j\not\in I}\lambda_I.$$
Hence we obtain the formula
\begin{equation}  \label{Vk}
\overline{V}_k(Z_\alpha)=
\frac{\beta_{d,k}^{-1}\;(2\pi)^{\frac{k-d-1}2}}{O_{d-1}\sigma^{d-k}\sqrt{\det\Lambda}}e^{-\frac{\alpha^2}{2\sigma^2}}H_{k^*}\left(\frac\alpha\sigma\right)\int_{S^{d-1}}\frac{\sum_{j=1}^d\lambda_{(j)}\langle u,b_j\rangle^2}{(u^T\Lambda^{-1}u)^{\frac k2+1}}\,\Ha^{d-1}(du).
\end{equation}
In the isotropic case, we have 
$$\lambda_i=\lambda:=\E\left(\frac{\partial\xi}{\partial x_1}(0)\right)^2=-\frac{\partial^2C}{\partial x_1^2}(0), \quad i=1,\dots,d,$$ 
hence, $\det\Lambda=\lambda^d$, $\lambda_{(j)}=\lambda^{k^*}$, $j=1,\dots,d$, and $u^T\Lambda^{-1}u=\lambda^{-1}$, and \eqref{Vk} simplifies to the formula given in Corollary~\ref{C3}.

\end{document}